\documentclass[11pt]{amsart}
\usepackage{amssymb,latexsym, amsmath, amsxtra}
\usepackage{graphicx}

\textwidth=6.5in \hoffset=-.75in

\theoremstyle{plain}
\newtheorem{theorem}{Theorem}[section]

\newtheorem{lemma}[theorem]{Lemma}

\theoremstyle{definition}

\theoremstyle{remark}

\numberwithin{equation}{section}

\DeclareMathOperator*{\Res}{Res}

 \parskip=.2cm

\begin{document}

\title{Symplectic $n$-level densities with restricted support}

\abstract
In this paper we demonstrate that the alternative form, derived by us in an earlier paper, of the $n$-level densities for eigenvalues of matrices from the classical compact group $USp(2N)$ is far better suited for comparison with derivations of the $n$-level densities of zeros in the family of Dirichlet $L$-functions associated with real quadratic characters than the traditional determinantal random matrix formula.  Previous authors have found ingenious proofs that the leading order term of the $n$-level density of the zeros agrees with the determinantal random matrix result under certain conditions, but here we show that comparison is more straightforward if the more suitable form of the random matrix result is used.  For the support of the test function in $[1,-1]$ and in $[-2,2]$ we compare with existing number theoretical results.  For support in $[-3,3]$ no rigorous number theoretical result is known for the $n$-level densities, but we derive the densities here using random matrix theory in the hope that this may make the path to a rigorous number theoretical result clearer.  
\endabstract

\author {A.M. Mason}
\address{NDM Experimental Medicine, University of Oxford, John Radcliffe Hospital, Oxford, OX3 9DU, United Kingdom} \email{amy.mason@ndm.ox.ac.uk}

\author{N.C. Snaith}
\address{School of Mathematics,
University of Bristol, Bristol, BS8 1TW, United Kingdom}
\email{N.C.Snaith@bris.ac.uk}

\thanks{
2010 Mathematics Subject Classification: 11M50, 15B52, 11M26, 11G05, 11M06, 15B10\\
The first author was supported by an EPSRC Doctoral Training Grant.  The second author was supported at the ICERM Semester Program on ``Computational Aspects of the Langlands Program" during the majority of this work. } \maketitle


\section{Introduction}

In the 1990s Katz and Sarnak \cite{kn:katzsarnak99a,kn:katzsarnak99b} considered statistics of zeros of $L$-functions near the critical point (the point at which the real axis crosses the critical line on which the complex zeros are expected to lie).  They predict that in a natural family of $L$-functions these zeros would, when averaged across the family, display the same statistical behaviour as the eigenvalues near 1 of matrices chosen at random with respect to Haar measure from the classical compact groups $U(N)$, $O(N)$ or $USp(2N)$. To see this correspondence with random matrix theory it is necessary to scale the zeros by their mean density and a natural asymptotic limit is taken.  Under these conditions, local statistics of the zeros are expected to match the equivalent statistics of eigenvalues of random matrices of large dimension.  

There has been a large body of work aiming to prove the Katz-Sarnak correspondence under various conditions and for various families of $L$-functions. Some of these papers consider just the leading order terms of the statistics in the asymptotic limit (for example \cite{kn:alpmil14,kn:duemil06,kn:entrodrud,kn:fouiwa03,kn:gao14,kn:guloglu05,kn:ILS99,kn:levmil13,kn:mil04,kn:milpec12,kn:ozlsny99,kn:rub01,kn:young}).  Others (for example \cite{kn:fiomil15,kn:Goesetal,kn:huymilmor,kn:mil07,kn:mil09,kn:milmon}) match lower order terms with conjectures for the lower than leading order behaviour given by, for instance, the Ratios Conjectures  \cite{kn:cfz2,kn:consna06}. Ratios conjectures combine knowledge  from random matrix theory with non-rigourous number theoretical arguments to produce very precise formulae for averages of ratios of $L$-functions, and hence the zero statistics that can be derived from these ratios.  

Here we will concentrate just on the leading order behaviour of the $n$-level densities of the zeros of one particular family of $L$-functions, although the random matrix formulae presented can be compared equally well with any family of $L$-functions with statistics like $USp(2N)$.  The family is that of Dirichlet $L$-functions associated with real quadratic characters.  Members of the family are given, for fundamental discriminants $d$, by the Dirichlet series
\begin{equation}
L(s,\chi_d)=\sum_{n=1}^\infty \frac{\chi_d(n)}{n^s},
\end{equation}
valid for $\Re s >1$, where $\chi_d(n)$ is Kronecker's symbol.  In keeping with the literature on this subject, we denote the set of $d$'s in our family by $D(X)=\{d \;{\rm fundamental \; discriminant}: X/2\leq |d|<X\}$, although from the random matrix point of view the full range from 0 to $X$ would also be acceptable.  

The quest to provide evidence for the Katz-Sarnak philosophy for this family dates back to the work of 
 {\"O}zl{\"u}k and Snyder \cite{kn:ozlsny99} who demonstrated that the 1-level density for the family of quadratic Dirichlet $L$-functions matches, in the scaling limit, with the 1-level density of eigenvalues from large $USp(2N)$ matrices.  The method for obtaining level density statistics of zeros involves sampling a test function at the positions of the zeros.  All the known rigorous results require a restriction on the support of the Fourier transform of this test function.  In the case of \cite{kn:ozlsny99}, the Fourier transform of the test function has support in $[-2,2]$.  
 
 This work was extended to all $n$-level densities by Rubinstein \cite{kn:rub01}.  The test function in this case takes $n$ variables and Rubinstein writes the Fourier transform as $\prod_{i=1}^n \hat{f_i}(u_i)$ and requires it to be supported on $\sum_{i=1}^n |u_i|<1$, where
\begin{equation}\label{eq:fourier}
\hat{f}(u)=\int_{-\infty}^{\infty} f(x) e^{2\pi i x u} dx.
\end{equation}

Gao \cite{kn:gao14} succeeded in writing down the $n$-level densities when the support is extended to $\sum_{i=1}^n |u_i|<2$, but was only able to verify that these agree, at leading order, with random matrix theory for $n\leq 3$.  Levinson and Miller \cite{kn:levmil13} extended this to $n\leq 7$.

Recently Entin, Roditty-Gershon and Rudnick \cite{kn:entrodrud} proved that all $n$-level densities match with random matrix theory at leading order for $\sum_{i=1}^n |u_i|<2$ by an ingenious method using comparison with the equivalent statistic for function field zeta functions.  

One reason that there has been so much difficulty in confirming that the scaling limit of the $n$-level density agrees with random matrix theory is that until recently the most obvious form of the $n$-level density of $USp(2N)$ eigenvalues that number theorists had to compare their results to was the determinantal formula (written for simplicity for a test function $f$ that is symmetric under permutation of all variables): 

\begin{eqnarray}\label{eq:determinantal}
&&\lim_{N\rightarrow \infty} \int_{USp(2N)} \sum_{\substack{j_1,\ldots,j_n\\ {\rm distinct}}} f\Big(\tfrac{N}{\pi}\theta_{j_1}, \ldots, \tfrac{N}{\pi}\theta_{j_n} \Big)d\mu_{{\rm Haar}}\nonumber \\
&&=\lim_{N\rightarrow \infty}\frac{1}{n!} \int_{0}^\pi \cdots \int_{0}^\pi f\Big(\tfrac{N}{\pi}\theta_{j_1}, \ldots, \tfrac{N}{\pi}\theta_{j_n} \Big) \det_{n\times n} \Big(K_{USp(2N)}(\theta_k,\theta_j)\Big) d\theta_1 \cdots d\theta_n\nonumber \\
&&=\frac{1}{n!} \int_{0}^\infty \cdots \int_0^\infty f(\theta_1, \ldots, \theta_n) \det_{n\times n} K_{USp} (\theta_k,\theta_j) d\theta_1\cdots d\theta_n,
\end{eqnarray}
where
\begin{equation}
K_{USp(2N)} (x,y) = S_{2N+1}(y-x)-S_{2N+1}(y+x)
\end{equation}
with
\begin{equation}
S_N(x)=\frac{1}{2\pi} \frac{\sin Nx/2}{\sin x/2},
\end{equation}
and
\begin{equation}
K_{USp}(x,y) = S(y-x)+S(y+x)
\end{equation}
with
\begin{equation}
S(x)=\frac{\sin \pi x}{\pi x}.
\end{equation}
In the above the eigenvalues of a matrix $A\in USp(2N)$ are denoted $e^{\pm i\theta_1},\ldots,e^{\pm i\theta_N}$.

The derivation of formula (\ref{eq:determinantal}) is a beautiful piece of mathematics (see for example \cite{kn:mehta} or \cite{kn:conrey04}), and the determinantal structure is very important in random matrix theory, but this formulation causes two problems when trying to compare it with $n$-level densities of zeros of $L$-functions. 

The first difficulty is the condition that the sum is over distinct eigenvalues.  In number theory, the sums naturally occur as sums over unrestricted $n$-tuples of zeros.  Therefore the first step in comparing a number theoretical result with random matrix theory is to do combinatorial sieving to remove duplicate zeros from the sum.  This can be achieved, but adds an extra layer of complication. 

The second difficulty is that the number theoretical $n$-level densities don't naturally take the structure of a determinant.  It is the task of trying to artificially create the determinantal form, or alternatively of unpicking the determinantal form on the random matrix side, that has made this problem so challenging.  

Conrey and Snaith \cite{kn:consna08} demonstrated for the case of unitary matrices, that if one forgoes the beautiful machinery that leads to the determinantal form of the $n$-level density and instead proceeds via averages of ratios of characteristic polynomials of $U(N)$ matrices then you arrive at a different formula for the densities (also called correlation functions).  They show \cite{kn:consna14} that with the alternative formula it is much more straightforward to reproduce the work in which Rudnick and Sarnak \cite{kn:rudsar} show that for test functions with limited support, the leading order correlations of the zeros of automorphic $L$-functions are the same as those of eigenvalues of random unitary matrices.  Proceeding via conjectures for averages of ratios of $L$-functions is one way to obtain very detailed formulae for lower order terms in the correlations of zeros, as demonstrated in \cite{kn:consna08} for the Riemann zeta function.  So, the conclusion is that  if you follow a number theoretical method in random matrix theory then you arrive at a formula for the $n$-level density that is not as clean as (\ref{eq:determinantal}) but is far more useful for comparison to number theory.  Almost simultaneously with \cite{kn:consna14}, Entin gave another method \cite{kn:entin14} for reproducing Rudnick and Sarnak's work without their complicated combinatorics.  He uses a comparison with Artin-Schreier function field $L$-functions. 

In a recent paper \cite{kn:massna16} we derive an alternative formula for the $n$-level densities of eigenvalues from matrices in $SO(2N)$ and $USp(2N)$ that is calculated, as in \cite{kn:consna08}, via averages of ratios of characteristic polynomials.  In the present paper we will demonstrate that armed with this alternative formula, it is relatively straightforward to reproduce the work done by Rubinstein \cite{kn:rub01} for $\sum_{i=1}^n |u_i|<1$ and Gao \cite{kn:gao14}, Levinson and Miller \cite{kn:levmil13} and Entin, Roditty-Gershon and Rudnick \cite{kn:entrodrud} for $\sum_{i=1}^n |u_i|<2$ to match their number theoretical formulae with random matrix theory.  We will also write down a formula for the limiting random matrix $n$-level density for support in $\sum_{i=1}^n |u_i|<3$ that closely follows the form of Gao's expression.  As far as we know there are no rigourous number theoretical results for this range of support and we hope that presenting the result explicitly in a helpful form may bring nearer the day when such a rigourous result is obtained.  

We will see that that  the $n$-level density of the zeros of quadratic Dirichlet $L$-functions in the limit of large conductor does not contain arithmetic information specific to the family, so the proof given here that it coincides with random matrix theory also holds for any other symplectic family if the $n$-level density is worked into the form of (\ref{eq:rubinstein}) or (\ref{eq:gaotheorem}).  The $n$-level densities of orthogonal families have almost identical structure, but with a few plus and minus signs exchanged, so no new results would be needed to compare the $n$-level density of an orthogonal family with the orthogonal formula in \cite{kn:massna16}.

We restate here the alternative $n$-level density formula from Section 7.2 of \cite{kn:massna16}.   So that we can extend our contours of integration to infinity in analogy with the number theory case, we extend the eigenangles of the matrices periodically (note that this notation is slightly different from that in \cite{kn:massna16} but the idea is the same).  So, if we have a matrix in $USp(2N)$ with eigenvalues $e^{\pm i\theta_1}, \ldots,e^{\pm i\theta_N}$, we can define for integer $k$ and for  $r=\pm1, \ldots,\pm N$ the sequence of angles
\begin{equation}
\theta_{r}+2k\pi=\left\{\begin{array}{cl}  \theta_{r+2kN} & {\rm if\;} r>0{\rm \;and\;} k\geq0\\ \theta_{r+2kN-1} &{\rm if \;} r>0 {\rm \; and\;} k<0 \\ \theta_{r+2kN+1}&{\rm if\;} r<0 {\rm \;and\;} k>0 \\ \theta_{r+2kN} & {\rm if\;} r<0{\rm\;and\;} k\leq 0 \end{array}\right.
\end{equation}

\begin{theorem}(Mason and Snaith \cite{kn:massna16}) \label{theo:massna}
For a matrix in $USp(2N)$ with eigenvalues $e^{\pm i\theta_1}, \ldots,e^{\pm i\theta_N}$ extended periodically as above and a test function that is holomorphic and decays rapidly in each variable in horizontal strips, we have
\begin{align}
\begin{split}
 &\int_{USp\left(2N\right)} {\sum_{\substack{j_1, \cdots, j_n =-\infty\\j_1,\ldots,j_n\neq 0}}^\infty {F\left(\theta_{j_1}, \cdots,
\theta_{j_n}\right) d\mu_{{\rm Haar}}} } \\
&= \frac{1}{\left(2 \pi i \right)^n} \sum_{K\cup L \cup M = \left\{1, \cdots,
n\right\}}{ (2N) ^{\left|M\right|}} \\
& \qquad \times \int_{\left(\delta\right)^{|K|}} \int_{(-\delta)^{|L|}}\int_{\left(0\right)^{|M|}}{J_{USp(2N)}^{*}\left(z_K\cup -z_L \right)F\left(iz_1, \cdots,
iz_n\right)dz_1 \cdots d z_n}. \label{eq:Uworking}
\end{split}
\end{align}
where
\begin{align}
\begin{split}\label{def:UJstar}
J_{USp(2N)}^*(A) &= \sum_{D\subseteq A}{e^{-2N \sum\limits_{d \in D}{d}}(-1)^{|D|} \sqrt{\frac{Z(D,D)Z(D^-,D^-)Y(D^-)}{Y(D)Z^{\dag}(D^{-},D)^2}}}\\
& \qquad \times \sum_{\substack{A/D = W_1 \cup \cdots \cup W_R \\ |W_r| \leq 2}}{\prod_{r=1}^{R}{H_D(W_r)}}
\end{split}
\end{align}
and
\begin{align}
H_D(W_r)=
\begin{cases}
\begin{aligned}
&\sum_{d \in D}{\left(\frac{z^{'}}{z}\left( \alpha-d\right)-\frac{z^{'}}{z}\left(\alpha+d\right)\right)}\\
& \qquad + \frac{z^{'}}{z}\left(2 \alpha\right)
\end{aligned} & W_r = \{\alpha\}\\
\begin{aligned}
\left(\frac{z^{'}}{z}\right)^{'}\left(\alpha + \beta\right)\end{aligned} & W_r = \{\alpha, \beta\}\\
1& W_r=\emptyset
\end{cases}
\end{align}
with
\begin{align}
z(x) &= \frac{1}{1-e^{-x}},\\
Y(A)&= \prod_{\alpha \in A}{z(2\alpha)}\label{def:orthoY},\\
Z(A,B) &= \prod_{\substack{\alpha \in A \\ \beta \in B}}{z(\alpha + \beta)}\label{def:orthoZ}.
\end{align}
The dagger on  $Z$ adds a restriction that a factor $z(x)$ is omitted if its argument is zero. 

Here $A$ is a set of complex $\alpha$.  $D^{-}=\{-\alpha: \alpha \in D\}$, $A \backslash D=\{\alpha \in A, \alpha \not\in D\}$ and the notation $K\cup L \cup M = \left\{1, \cdots,
n\right\}$ and $A/D = W_1 \cup \cdots \cup W_R$ means a sum over disjoint subsets.   $|M|$ is the size of set $M$.   The contour of integration denoted $(\delta)^{|K|}$ means all the $z$ variables with index in $K$  are integrated on the vertical line with real part $\delta$.    In (\ref{eq:Uworking}) $J_{USp(2N)}^*$ is evaluated at $z$s with index in the set $K$ or $L$. The $z$s with index in $L$ appear with a negative sign.  
\end{theorem}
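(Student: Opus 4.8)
The plan is to follow the ``ratios'' route, working with averages of (logarithmic derivatives of) characteristic polynomials as Conrey and Snaith did for the unitary group \cite{kn:consna08,kn:consna14}, rather than the determinantal machinery behind (\ref{eq:determinantal}). First I would rewrite each of the $n$ eigenangle sums as a contour integral. For a matrix $A \in USp(2N)$ with characteristic polynomial $\Lambda_A(x) = \det(I - xA) = \prod_{r=1}^{N}(1 - x e^{i\theta_r})(1 - x e^{-i\theta_r})$, a Cauchy-type identity expresses $\sum_{j \neq 0} F(\theta_j)$, with the $\theta_j$ extended periodically as in the statement, as an integral of $F(iz)$ against $\Lambda_A'/\Lambda_A$; the periodic extension is exactly what allows the contour to be straightened into a single infinite vertical line, in analogy with the number-theoretic side. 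The functional equation $\Lambda_A(1/x) = x^{-2N}\Lambda_A(x)$ then splits this integral into three pieces: a contour on $\Re z = \delta$ coming from $\Lambda_A'/\Lambda_A$ itself, a contour on $\Re z = -\delta$ coming from the dual term (which, after reflecting $z \mapsto -z$, produces the $-z_L$ and the overall sign on the $L$-variables), and a main term proportional to $2N$ left on $\Re z = 0$. Taking the product over the $n$ variables generates precisely the sum over disjoint decompositions $K \cup L \cup M = \{1,\dots,n\}$ together with the factor $(2N)^{|M|}$ and the contours $(\delta)^{|K|}$, $(-\delta)^{|L|}$, $(0)^{|M|}$.

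Next I would interchange the Haar integral with the iterated contour integrals, which is legitimate because $F$ is holomorphic and decays rapidly in horizontal strips and the logarithmic derivatives are bounded on vertical lines bounded away from the unit circle, so that all integrals converge absolutely. This leaves the multivariate average $\int_{USp(2N)} \prod_i (\Lambda_A'/\Lambda_A)(\cdot)\, d\mu_{\mathrm{Haar}}$ of logarithmic derivatives, evaluated at the $z$'s indexed by $K$ and the $-z$'s indexed by $L$; this is where the arithmetic-free structure of the density enters, since the average depends only on the group. It is obtained by differentiating the exact symplectic ratios-of-characteristic-polynomials identity (the $USp(2N)$ analogue of the formula underlying \cite{kn:consna08}): the average of $\prod \Lambda_A(\text{numerator shifts})/\prod \Lambda_A(\text{denominator shifts})$ is a sum over ``swaps'', i.e.\ over subsets $D$ of the numerator parameters whose signs are flipped, reflecting the $\alpha \mapsto -\alpha$ symmetry of symplectic spectra together with the functional equation. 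Each swap term carries a conductor factor $e^{-2N\sum_{d \in D} d}$, a sign $(-1)^{|D|}$, a square root of a ratio of $Y$- and $Z$-products (the ``scattering'' factor picked up from the functional equation at the flipped parameters, with the dagger recording that a self-pairing $z(0)$ must be omitted), and a $Z$--$Y$ product over the remaining parameters.

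The key point is then that forming the logarithmic derivatives from this identity, via the multivariate Leibniz rule, produces for each swap term that same term multiplied by a sum over set partitions $A/D = W_1 \cup \cdots \cup W_R$ of the differentiated variables, each block contributing the connected mixed logarithmic derivative $H_D(W_r)$. Because the logarithm of a swap term is, in those variables, a sum of one-body terms (from $\log Y(\cdot) = \sum \log z(2\alpha)$ and from the conductor and scattering factors) and two-body terms (from $\log Z(\cdot,\cdot) = \sum \log z(\alpha+\beta)$), every connected derivative involving three or more variables vanishes; this forces $|W_r| \le 2$ and produces exactly the stated $H_D$ -- a singleton $\{\alpha\}$ gives the sum over $d \in D$ of the shifted $z'/z$ terms together with $(z'/z)(2\alpha)$, and a pair $\{\alpha,\beta\}$ gives $(z'/z)'(\alpha+\beta)$. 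Collecting the swap sum over $D$ and the partition sum over the $W_r$ reassembles $J^*_{USp(2N)}$ as in (\ref{def:UJstar}), and combining this with the three-fold split of the first paragraph gives (\ref{eq:Uworking}).

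I expect the main obstacle to be combinatorial bookkeeping rather than any single hard estimate: one must track carefully the signs, the branch of the square-root scattering factor, and the dagger-exclusion as parameters are flipped and then specialised to the logarithmic-derivative limit, and then verify that the doubly-indexed sum over swaps $D$ and set partitions $\{W_r\}$ collapses with no leftover terms into the compact expression for $J^*_{USp(2N)}$. A secondary technical point is the rigorous justification of the contour straightening, the three-fold split, and the interchange of Haar measure with the iterated contour integrals, all of which rest on the holomorphy and rapid decay of $F$ in horizontal strips.
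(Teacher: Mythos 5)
Your sketch is correct and follows essentially the route of the cited source: the present paper does not reprove Theorem \ref{theo:massna} but imports it from \cite{kn:massna16}, where it is derived exactly as you describe --- contour representation of the eigenangle sums via $\Lambda_A'/\Lambda_A$, the three-fold split from the functional equation giving the $K\cup L\cup M$ decomposition and the $(2N)^{|M|}$ factor, and differentiation of the symplectic ratios formula whose one- and two-body log structure forces $|W_r|\le 2$ and yields $J^*_{USp(2N)}$. The obstacles you flag (sign/branch bookkeeping in the swap terms, the dagger exclusion, and justifying the contour manipulations) are indeed where the real work lies in \cite{kn:massna16}, but your outline identifies the correct key identity and the correct combinatorial mechanism.
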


The test function that we will be working with in this paper satisfies the symmetry property
\begin{equation}
F(x_1,\ldots,x_n)=F(\pm x_1,\ldots,\pm x_n),
\end{equation}
which allows us to write (\ref{eq:Uworking}) as
\begin{align}
\begin{split}
 &\int_{USp\left(2N\right)} {\sum_{\substack{j_1, \cdots, j_n =-\infty\\j_1,\ldots,j_n\neq 0}}^\infty {F\left(\theta_{j_1}, \cdots,
\theta_{j_n}\right) d\mu_{{\rm Haar}}} } \\
&= \frac{1}{\left(2 \pi i \right)^n} \sum_{Q \cup M = \left\{1, \cdots,
n\right\}}{ (2N) ^{\left|M\right|}} \\
& \qquad \times \int_{\left(\delta\right)^{|Q|}} \int_{\left(0\right)^{|M|}}{2^{|Q|}J_{USp(2N)}^{*}\left(z_Q \right)F\left(iz_1, \cdots,
iz_n\right)dz_1 \cdots d z_n}. \label{eq:Uworking2}
\end{split}
\end{align}
To obtain this we make a change of variables $z_\ell\rightarrow -z_\ell$ in (\ref{eq:Uworking}) and then combine the sets $K$ and $L$ into the set $Q$.  However, since any $j\in Q$ could have originated in either $K$ {\em or} $L$, we acquire a factor of $2^{|Q|}$.  

As in the unitary case \cite{kn:consna14} the formulae in Theorem \ref{theo:massna} simplify dramatically when the support of the Fourier transform of the test function is restricted. Assume a test function of the form $f_1(x_1)\cdots f_n(x_n) $ so its Fourier transform has the form $\prod_{i=1}^n \hat{f_i}(u_i)$, with the definition of $\hat{f}$ at (\ref{eq:fourier}).  Assume further that $\prod_{i=1}^n \hat{f_i}(u_i)\neq 0$ only if $\sum_{j=1}^n |u_i|<q$ for some integer $q$. 

The definition of the inverse Fourier transform allows us to write
\begin{equation}
f\Big(\tfrac{N}{\pi}iz\Big) = \int_{-\infty}^\infty \hat{f}(u) e^{2Nzu}du.
\end{equation}
Thus if we are considering the $n$-level density of eigenvalues scaled by their mean density 
\begin{equation}
\int_{USp\left(2N\right)} {\sum_{\substack{j_1, \cdots, j_n =-\infty\\j_1,\ldots,j_n\neq 0}}^\infty {f_1\Big(\tfrac{N}{\pi}\theta_{j_1}\Big) \cdots f_n\Big(\tfrac{N}{\pi}\theta_{j_n}\Big)
d\mu_{{\rm Haar}}} }
\end{equation}
then in the contour integral on the right hand side of (\ref{eq:Uworking2}) there will be a factor with magnitude
\begin{align}
\left|\exp\left(2Nz_1u_1+ \cdots +2Nz_nu_n\right)\right| = e^{2N \delta (q-\varepsilon)},\label{USpfactor1}
\end{align}
for some $\varepsilon>0$.
If we consider only sets $D$ in (\ref{def:UJstar}) where $|D|\geq q$, we can see the exponential term $e^{-2N\sum_{d\in D} d}$ in $J_{USp(2N)}^{*}(A)$ is bounded by
\begin{align}
\left|e^{-2N \sum_{d \in D}{d}}\right| \leq e^{-2N\delta q} \label{USpfactor2}.
\end{align}
Hence the product of these two factors in \ref{USpfactor1} and \ref{USpfactor2} tends to zero as $\delta\rightarrow \infty$ as we move the contours of integration off to the right and only terms with $|D|<q$ survive.

Define
\begin{align}
\begin{split}\label{def:UJq}
J_{USp(2N),q}^*(A) &= \sum_{\substack{D\subseteq A\\ |D| < q}}{e^{-2N \sum\limits_{d \in D}{d}}(-1)^{|D|} \sqrt{\frac{Z(D,D)Z(D^-,D^-)Y(D^-)}{Y(D)Z^{\dag}(D^{-},D)^2}}}\\
& \qquad \times \sum_{\substack{A/D = W_1 \cup \cdots \cup W_R \\ |W_r| \leq 2}}{\prod_{r=1}^{R}{H_D(W_r)}},
\end{split}
\end{align}
for use when the support of the Fourier transform of the test function is in $\sum_{j=1}^n |u_j|<q$.

\section{Results}\label{sect:results}

The first purpose of this paper is to use the form of the $n$-level density in \cite{kn:massna16} to demonstrate for $q=1$ or $q=2$ that for test functions $f_1, \ldots, f_n$ with the product of their Fourier transforms $\prod_{i=1}^n \hat{f_i}(u_i)$ having support in $\sum_{i=1}^n |u_i|<q$, 
\begin{eqnarray}\label{eq:result}
&&\lim_{X\rightarrow \infty} \frac{1}{|D(X)|}  \sum_{d\in D(X)} \sum_{\substack{j_1,\ldots,j_n=-\infty\\j_1,\ldots,j_n\neq 0}}^\infty f_1\Big(L\gamma_d^{(j_1)} \Big)\cdots f_n\Big(L\gamma_d^{(j_n)}\Big)\nonumber \\
&& = \lim_{N\rightarrow \infty}  \int_{USp(2N)} \sum _{\substack{j_1,\ldots,j_n=-\infty \\j_1,\ldots,j_n\neq 0}}^\infty f_1\Big(\frac{N}{\pi} \theta_{j_1}\Big) \cdots f_n\Big(\frac{N}{\pi} \theta_{j_n}\Big) d\mu_{{\rm Haar}},
\end{eqnarray}
where the eigenangles $\theta_{j_i}$ are counted as described at Theorem \ref{theo:massna} and 
\begin{equation}
L=\frac{\log X}{2\pi},
\end{equation}
and 
\begin{equation}
1/2+i\gamma_d^{(j)}, \;\; j=\pm1,\pm2,\ldots,
\end{equation}
are the nontrivial zeros of $L(s,\chi_d)$ with
\begin{equation}
0\leq \Re \gamma_d^{(1)} \leq \Re \gamma_d^{(2)}\leq \Re \gamma_d^{(3)}\cdots
\end{equation}
and 
\begin{equation}
\gamma_d^{(-j)}=-\gamma_d^{(j)}.
\end{equation}
Note that this result is not new in itself but the point is to demonstrate that the use of  (\ref{eq:Uworking2}) makes matching the left hand side with the right hand side of  (\ref{eq:result}) much easier than if the determinantal form is used.  In Section \ref{sect:support1} we rederive the result of Rubinstein \cite{kn:rub01} for $\sum_{i=1}^n |u_i|<1$ and in Section \ref{sect:support2} we reproduce the result of Gao \cite{kn:gao14}, Levinson and Miller \cite{kn:levmil13} and Entin, Roditty-Gershon and Rudnick \cite{kn:entrodrud} for $\sum_{i=1}^n |u_i|<2$.  

Secondly, in Section \ref{sect:support3} we move to support $\sum_{i=1}^n |u_i|<q$ with $q=3$ and write  (\ref{eq:Uworking2}) in the style of Gao.  We note that no significant new ideas are needed for this beyond those used for $q=1$ and $q=2$, so on the random matrix side it seems that there is no fundamental barrier to writing down a formula like Gao's for any range of support - one just has to work with more involved formulae.  However, extending the support beyond $q=2$ in a rigourous number theoretical context seems to require completely new ideas and it is our hope that by writing down explicitly the final answer for $q=3$ in Section \ref{sect:support3} we may bring closer the day when such a calculation can be accomplished rigourously. 

We note that the calculations are carried out here for the quadratic family of Dirichlet $L$-functions, but no new results would be needed to compare the $n$-level density of any family of orthogonal or symplectic $L$-functions as long as the $n$-level density of the zeros was written in the form of Rubinstein or Gao by applying the explicit formula in a similar way. 

To prove our result we need the following key lemmas.  In all of the following we have the definition 
\begin{equation} \label{eq:z}
z(x)=(1-e^{-x})^{-1}, 
\end{equation}
and we assume the following about the test functions:
\begin{equation}\label{eq:testfunction}
\begin{array}{l} {\rm a)\; the\;test\;function\;has \;the\;form\;of\;a\;product\;} f_1(x_1)f_2(x_2)\cdots f_n(x_n);\\
{\rm b)\;each\;component\;is\;even,\;} f_j(x)=f_j(-x);\\
{\rm c)\;}f_j(x_j){\rm\;has\;no\;poles;}\\
{\rm d)\;} f_j(x_j){\rm \; is\; smooth;}\\
{\rm e)\;} f_j(x_j){\rm \; decays \;quickly\;in\;horizontal\;stips.}\end{array}
\end{equation}
The integrals are on the vertical lines with real part $\delta>0$ or $-\delta$, as indicated. 

The first three relatively simple lemmas are all that is needed if the support of the Fourier transform of the test function is restricted to $\sum_{i=1}^n |u_i|<1$.
\begin{lemma} \label{lem:1}
For test functions with the properties at (\ref{eq:testfunction}),
\begin{eqnarray}
&&\lim_{N\rightarrow \infty} \frac{4}{(2\pi i)^2} \int_{(\delta)} \int_{(\delta)} \left( \frac{z'}{z}\right)' (z_1+z_2)f_1(\tfrac{N}{\pi} iz_1) f_2(\tfrac{N}{\pi} iz_2) dz_1 dz_2 \nonumber \\
&& \qquad = 2\int_{-\infty}^{\infty} \hat{f_1}(u)\hat{f_2}(u) |u| du.
\end{eqnarray}
\end{lemma}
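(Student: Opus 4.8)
The plan is to compute the limit on the left-hand side by expanding the test functions via the inverse Fourier transform, carrying out the resulting $z$-integrals explicitly, and then taking $N\to\infty$. First I would write $f_j(\tfrac{N}{\pi}iz_j)=\int_{-\infty}^\infty \hat f_j(u_j)e^{2Nz_ju_j}\,du_j$, so that the double contour integral becomes
\[
\frac{4}{(2\pi i)^2}\int_{-\infty}^\infty\int_{-\infty}^\infty \hat f_1(u_1)\hat f_2(u_2)\left(\int_{(\delta)}\int_{(\delta)}\left(\frac{z'}{z}\right)'(z_1+z_2)\,e^{2Nz_1u_1+2Nz_2u_2}\,dz_1\,dz_2\right)du_1\,du_2 .
\]
The inner double integral in $z_1,z_2$ depends only on the combination dictated by the pole structure of $(z'/z)'(z_1+z_2)$: since $z(x)=(1-e^{-x})^{-1}$, one has $z'/z(x)=1/(1-e^{x})+$ (something I will compute), and $(z'/z)'$ is a meromorphic function with poles at $x\in 2\pi i\mathbb Z$. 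The key device is the standard change of variables $w=z_1+z_2$, $z_2=z_2$, which collapses one of the two integrations: the $z_2$-integral of $e^{2Nz_2(u_2-u_1)}$ along a vertical line forces, in the limit, $u_1=u_2$ — more precisely it produces (a multiple of) $\delta(u_1-u_2)$ after the $u$-integration, which is how the single integral $\int \hat f_1(u)\hat f_2(u)|u|\,du$ on the right-hand side arises.

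The core computation is therefore the one-variable contour integral $\int_{(\delta')}(z'/z)'(w)\,e^{2Nwu}\,dw$ as $N\to\infty$. I would shift the contour and pick up residues, or equivalently recognize $(z'/z)'(w)$ as a sum over its poles; the pole at $w=0$ contributes the main term. A clean way is to use $\tfrac{z'}{z}(w)=\tfrac12-\tfrac12\coth(w/2)+\ldots$ — wait, let me instead note $1-e^{-w}$ has a simple zero at $w=0$, so $z'/z(w)=-\tfrac{d}{dw}\log(1-e^{-w})$, hence $(z'/z)'(w)=-\tfrac{d^2}{dw^2}\log(1-e^{-w})$, which near $w=0$ behaves like $-1/w^2+$ (analytic). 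The integral $\frac{1}{2\pi i}\int_{(\delta')}(-1/w^2)e^{2Nwu}\,dw$ evaluates, by residues, to $-2Nu$ for $u<0$ (closing left) and to $0$ for $u>0$ (closing right, assuming $\delta'>0$); the remaining analytic part of $(z'/z)'$ and the other poles at $2\pi i k$, $k\neq 0$, contribute terms that are either independent of $N$ or exponentially small, hence negligible after dividing by nothing — but here there is no normalizing $1/N$, so I must check that these non-principal contributions genuinely vanish or cancel against the $\hat f$-integrals. Combining the two $z$-integrals: one factor gives $\sim -2Nu_1$ supported on $u_1<0$, and the $z_2$-integral against $e^{2Nz_2(u_2-u_1)}$ localizes $u_2\to u_1$; tracking the constant $4/(2\pi i)^2$, the two sign flips, and the factor $2N$ against the Jacobian of the localization (which produces a $1/(2N)$), I expect to land on $2\int_{-\infty}^\infty \hat f_1(u)\hat f_2(u)|u|\,du$, the $|u|$ coming from combining the $u<0$ and (by the evenness $\hat f_j(-u)=\hat f_j(u)$, since each $f_j$ is even) $u>0$ ranges.

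The main obstacle will be making the localization $z_2$-integral rigorous and showing the subleading pieces vanish: unlike a $1/N$-normalized statistic, here the $2N$ growth from the $w=0$ residue must be exactly balanced by a $1/N$ from the $u$-localization, and one has to justify interchanging the $N\to\infty$ limit with the $u_1,u_2$ integrations. I would handle this by the standard argument that $\hat f_j$ is Schwartz (from properties (c)–(e) of the test functions), so dominated convergence applies once the inner $z$-integrals are bounded uniformly; the exponentially small contributions from poles of $(z'/z)'$ at $2\pi i k$ with $k\neq0$ are controlled because $\delta'$ can be kept fixed in $(0,2\pi)$ while $N\to\infty$. An alternative, perhaps cleaner, route avoiding the delta-function heuristic is to do the $z_1$-integral first to get (asymptotically) $-2Nu$ on $u<0$ times $e^{2Nz_2 u}$-type factors from the $z_1=z_2$... no — better: substitute $w=z_1+z_2$ and integrate $w$ first by residues to reduce to a single clean $z_2$-integral of the form $\int_{(\delta)}(\text{const}\cdot N z_2 + \ldots)f_1(\tfrac N\pi i z_2)f_2(\ldots)$, then recognize that as $2N\int \hat f_1 \hat f_2 |u|\,du$ by the same one-variable lemma-type computation, which is presumably how Lemma \ref{lem:1}'s proof in the paper will proceed.
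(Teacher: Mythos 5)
Your proposal follows essentially the same route as the paper's proof: expand the $f_j$ via the inverse Fourier transform, use the double pole of $(z'/z)'$ at $z_1+z_2=0$ to produce the factor linear in $u$ times $2N$, let the remaining vertical integral localize $u_1=u_2$ as a delta function (absorbing the $2N$), and invoke evenness of $\hat{f_j}$ to turn the resulting half-line integral into $2\int_{-\infty}^{\infty}|u|\hat{f_1}(u)\hat{f_2}(u)\,du$ --- the paper simply rescales $z_j\to\tfrac{\pi}{N}z_j$ first so that $(z'/z)'$ may be replaced by $1/(z_1+z_2)^2$ exactly in the limit, which disposes of your (legitimate) worry about the non-principal part and the poles at $2\pi i k$, $k\neq 0$. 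Two compensating slips to fix in a write-up: near $0$ one has $(z'/z)'(w)\sim +1/w^2$, not $-1/w^2$, and with $\delta'>0$ the contour closes to the \emph{right} (giving zero) when $u<0$ and to the \emph{left} (picking up the residue $2Nu$) when $u>0$, so the surviving contribution is supported on $u>0$; the two sign reversals cancel, and evenness of $\hat{f_1}\hat{f_2}$ yields the same final answer.
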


\begin{lemma}\label{lem:2}For test functions with the properties at (\ref{eq:testfunction}),
\begin{eqnarray}
&&
\lim_{N\rightarrow \infty} \frac{2}{2\pi i} \int_{(\delta)} \frac{z'}{z}(2z) f(\tfrac{N}{\pi} i z) dz \nonumber \\
&&\qquad = -\frac{1}{2} \int_{-\infty}^{\infty} \hat{f_1}(u) du.
\end{eqnarray}
\end{lemma}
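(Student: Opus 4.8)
The plan is to collapse the whole expression to an elementary Riemann sum. First I would rewrite the kernel on the contour $\Re z=\delta>0$: from $z(x)=(1-e^{-x})^{-1}$ one gets $\frac{z'}{z}(x)=-e^{-x}/(1-e^{-x})$, hence
\begin{equation*}
\frac{z'}{z}(2z)=-\frac{e^{-2z}}{1-e^{-2z}}=-\sum_{m=1}^{\infty}e^{-2mz},
\end{equation*}
the geometric series converging absolutely and uniformly on the line $\Re z=\delta$ because $|e^{-2z}|=e^{-2\delta}<1$. Since moreover $f(\tfrac{N}{\pi}iz)$ decays rapidly along that vertical line (the decay hypothesis (e) on the test functions), Tonelli's theorem lets me interchange $\sum_m$ with $\int_{(\delta)}$, as $\sum_m\int_{(\delta)}\big|e^{-2mz}f(\tfrac{N}{\pi}iz)\big|\,|dz|\le\big(\sum_m e^{-2m\delta}\big)\int_{(\delta)}\big|f(\tfrac{N}{\pi}iz)\big|\,|dz|<\infty$.

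The heart of the argument is then to evaluate the single term $\frac{1}{2\pi i}\int_{(\delta)}e^{-2mz}f(\tfrac{N}{\pi}iz)\,dz$ in closed form. Parametrising $z=\delta+it$ and making the change of variables $t\mapsto-\tfrac{\pi}{N}\sigma$ recasts this term as $\frac{e^{-2m\delta}}{2N}\int_{\mathbb{R}}e^{2\pi i m\sigma/N}f\big(\sigma+i\tfrac{N}{\pi}\delta\big)\,d\sigma$. Because $f$ is holomorphic with no poles and decays rapidly in horizontal strips, I can shift the $\sigma$-contour down to the real axis; the argument of $f$ thereby becomes real, the exponential picks up a factor $e^{2m\delta}$ that cancels the $e^{-2m\delta}$ already present, and what remains is exactly $\tfrac{1}{2N}\hat f(m/N)$ with the normalisation (\ref{eq:fourier}). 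Substituting back, for every fixed $N$,
\begin{equation*}
\frac{2}{2\pi i}\int_{(\delta)}\frac{z'}{z}(2z)\,f\big(\tfrac{N}{\pi}iz\big)\,dz=-2\sum_{m=1}^{\infty}\frac{\hat f(m/N)}{2N}=-\frac{1}{N}\sum_{m=1}^{\infty}\hat f(m/N).
\end{equation*}

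It only remains to let $N\to\infty$: the right-hand side is the right-endpoint Riemann sum, of mesh $1/N$, for $\int_{0}^{\infty}\hat f(u)\,du$, which converges since $\hat f$ is Schwartz (from properties (c), (d), (e) of the test functions). Finally, each $f_j$ is even, so its Fourier transform is even and $\int_{0}^{\infty}\hat f(u)\,du=\tfrac12\int_{-\infty}^{\infty}\hat f(u)\,du$, giving the claimed limit $-\tfrac12\int_{-\infty}^{\infty}\hat f_1(u)\,du$.

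The one genuinely delicate point, I expect, is the contour shift: one has to check that moving the argument of $f$ from $\sigma+i\tfrac{N}{\pi}\delta$ back down to the real axis is legitimate, i.e. that the rapid decay of $f$ in the relevant horizontal strip dominates the $e^{2m\delta}$ growth of the exponential picked up along the way and that the connecting segments at $\Re\sigma=\pm\infty$ contribute nothing in the limit. (A quicker but less rigorous route is to insert $f(\tfrac{N}{\pi}iz)=\int\hat f(u)e^{2Nzu}\,du$, interchange, and use the distributional identity $\frac{1}{2\pi i}\int_{(\delta)}e^{2(Nu-m)z}\,dz=\tfrac{1}{2N}\delta\!\left(u-\tfrac{m}{N}\right)$; this lands on the same Riemann sum.) The remaining ingredients — the geometric expansion, the use of Tonelli, and the Riemann-sum limit — are routine.
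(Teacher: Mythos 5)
Your proof is correct, but it takes a genuinely different route from the paper's. The paper first rescales $z\mapsto\tfrac{\pi}{N}z$, replaces $\tfrac{z'}{z}$ by its small-argument asymptotic $-1/x+O(1)$ to pass to the $N\to\infty$ limit inside the integral (the $O(1)$ error contributing $O(1/N)$ after the rescaling), and only then inserts the Fourier representation of $f$ and closes the contour around the simple pole at $z=0$, picking up the residue for $u>0$ and nothing for $u<0$. You instead keep $N$ fixed, expand the kernel exactly as the geometric series $\tfrac{z'}{z}(2z)=-\sum_{m\ge1}e^{-2mz}$, and evaluate each term in closed form as $\tfrac{1}{2N}\hat f(m/N)$, so that the left-hand side equals the exact finite-$N$ identity $-\tfrac{1}{N}\sum_{m\ge1}\hat f(m/N)$, whose $N\to\infty$ limit is the Riemann sum for $-\int_0^\infty\hat f$. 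Your version buys an exact formula before any limit is taken (pleasingly parallel to the number-theoretic explicit formula, where the $m$-sum plays the role of the sum over prime powers) and sidesteps the paper's slightly informal interchange of the $N\to\infty$ limit with the contour integral; the paper's version is shorter and reuses the machinery set up for Lemma \ref{lem:1}. The contour shift you flag is indeed the only delicate step, and it is harmless for the reason you give: for fixed $N$ the factor $|e^{2\pi imw/N}|$ is bounded by $1$ throughout the strip $0\le\Im w\le N\delta/\pi$, and hypothesis (e) kills the vertical connecting segments, exactly as the paper itself assumes when it moves contours between $(\delta)$ and $(N\delta/\pi)$ in the proof of Lemma \ref{lem:1}.
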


\begin{lemma}\label{lem:3}For test functions with the properties at (\ref{eq:testfunction}),
\begin{equation}
\frac{2N}{2\pi i} \int_{(-\delta)} f(\tfrac{N}{\pi} i z) dz = \int_{-\infty} ^{\infty} f(y)dy
\end{equation}
\end{lemma}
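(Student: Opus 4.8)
The plan is to observe that, unlike Lemmas \ref{lem:1} and \ref{lem:2}, this is an \emph{exact} identity valid for every fixed $N$, with no asymptotic limit to take; the proof is therefore just a contour shift together with a linear change of variables. Writing $z = x+iy$, the argument $\tfrac{N}{\pi} i z = -\tfrac{N}{\pi}y + i\tfrac{N}{\pi}x$ has real part $-\tfrac{N}{\pi}y$, so as $z$ ranges over the strip $-\delta \le \Re z \le 0$ the quantity $\tfrac{N}{\pi}iz$ ranges over the horizontal strip $0 \le \Re(\tfrac{N}{\pi}iz) \le \tfrac{N}{\pi}\delta$. By properties (c)--(e) of the test function at \eqref{eq:testfunction}, $f$ is holomorphic and decays rapidly in that strip, so $z \mapsto f(\tfrac{N}{\pi}iz)$ is holomorphic in $-\delta \le \Re z \le 0$ and decays rapidly as $|\Im z| \to \infty$, uniformly for $\Re z$ in that range.

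First I would shift the contour from $\Re z = -\delta$ to $\Re z = 0$. Applying Cauchy's theorem to the rectangle with vertical sides $\Re z = -\delta$ and $\Re z = 0$ and horizontal sides $\Im z = \pm T$, the decay just noted makes the two horizontal edges contribute $o(1)$ as $T \to \infty$, and there are no poles inside, so
\[
\int_{(-\delta)} f\!\left(\tfrac{N}{\pi} i z\right) dz = \int_{(0)} f\!\left(\tfrac{N}{\pi} i z\right) dz .
\]
Next I would parametrise the line $\Re z = 0$ by $z = it$ with $t \in \mathbb{R}$ and $dz = i\,dt$, which, after including the prefactor $\frac{2N}{2\pi i}$, turns the right-hand side into $\frac{N}{\pi}\int_{-\infty}^{\infty} f(-\tfrac{N}{\pi}t)\,dt$. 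Finally the substitution $y = -\tfrac{N}{\pi}t$ (equivalently $y = \tfrac{N}{\pi}t$, using that $f$ is even by property (b)) produces $\int_{-\infty}^{\infty} f(y)\,dy$, which is the claim.

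The only step requiring any care is the contour shift, and even this is a routine application of Cauchy's theorem: the decay hypothesis (e) is precisely what makes the horizontal edges of the rectangle vanish in the limit, and the absence of poles (c) guarantees there is no residue contribution. There is no genuine analytic obstacle here; this lemma is the easy base case on which the more substantial Lemmas \ref{lem:1} and \ref{lem:2} --- and, later, the $q=2$ and $q=3$ analyses --- are built.
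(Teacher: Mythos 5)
Your proof is correct and is essentially the paper's own argument: the paper simply performs the change of variable $z=-\pi i y/N$, which packages your contour shift and parametrisation into a single step. (One minor slip: the image of the strip $-\delta\le\Re z\le 0$ under $z\mapsto \tfrac{N}{\pi}iz$ is the \emph{horizontal} strip $-\tfrac{N\delta}{\pi}\le \Im w\le 0$ with unbounded real part, not $0\le\Re w\le\tfrac{N\delta}{\pi}$, but this does not affect the argument since hypothesis (e) gives exactly the decay needed there.)
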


The fourth lemma is needed when support is extended to $\sum_{i=1}^n |u_i|<2$.
\begin{lemma}\label{lem:4}
For  disjoint sets $A\cup B=\{1,\ldots,m\}$ with $|B|\geq 1$ and $B\backslash \{d\}$ denoting the set $B$ excluding the element $d$, and for test functions with the properties at (\ref{eq:testfunction}),
\begin{eqnarray}
&&\lim_{N\rightarrow \infty} \sum_{d\in B} \frac{2}{2\pi i} \int_{(\delta)}  -e^{-2Nz_d} z(-2z_d) \left( \prod_{i\in A} \frac{-2}{2 \pi i} \int_{(\delta)}\frac{z'}{z}(z_i+z_d) f_i\Big(\tfrac{N}{\pi} i z_i\Big)  dz_i\right) \nonumber \\
&&\times \left( \prod_{j\in B\backslash \{d\}} \frac{2}{2\pi i} \int_{(\delta)} \frac{z'}{z} (z_j-z_d)f_j\Big(\tfrac{N}{\pi} i z_j\Big) dz_j \right) f_d\Big(\tfrac{N}{\pi} i z_d\Big) dz_d \nonumber\\
&&=\frac{-1}{2}2^{|A\cup B|}(-1)^{|B|} \int_{\substack{(\mathbb{R}\geq 0)^{A\cup B} \\ \sum_{i\in A}u_i\leq (\sum_{i\in B} u_i)-1}} \prod_{i\in A\cup B} \hat{f_i}(u_i) \prod_{i\in A\cup B} du_i.
\end{eqnarray}
\end{lemma}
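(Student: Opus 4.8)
The plan is to reduce the multi-variable integral to a product of one-variable Fourier-transform identities, handling the $z_d$-integral last. First I would substitute the inverse-Fourier-transform representation $f_i(\tfrac{N}{\pi} i z_i) = \int_{-\infty}^\infty \hat f_i(u_i) e^{2N z_i u_i}\,du_i$ into every factor, so that each $z$-contour integral takes the shape $\frac{1}{2\pi i}\int_{(\delta)} g(z_i\pm z_d) e^{2N z_i u_i}\,dz_i$ with $g=z'/z$ and, in the $d$-slot, $g(z_d)=z(-2z_d)$ together with the extra exponential $e^{-2N z_d}$. The key local computation is the asymptotic evaluation, as $N\to\infty$ with $\delta>0$ fixed, of integrals of the form $\frac{1}{2\pi i}\int_{(\delta)} \frac{z'}{z}(z_i+c)\,e^{2N z_i u_i}\,dz_i$; the function $\frac{z'}{z}(x) = \frac{e^{-x}}{1-e^{-x}}$ has a simple pole at $x=0$ with residue $-1$ (and further poles at $2\pi i k$), so shifting the $z_i$-contour picks up a residue contribution that produces $-e^{-2N c u_i}\mathbf{1}_{u_i>0}$ plus terms killed in the limit — this is precisely the mechanism already packaged in Lemmas~\ref{lem:1}--\ref{lem:3}, and I would cite those or re-run the same contour-shift argument. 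The net effect is that each $i\in A$ contributes a factor $\frac{-2}{2\pi i}\int \to$ something proportional to $\hat f_i(u_i)\mathbf{1}_{u_i\ge 0}$ with an exponential weight $e^{-2N u_i z_d}$ in the remaining $z_d$-variable, and similarly each $j\in B\setminus\{d\}$ contributes $\hat f_j(u_j)\mathbf{1}_{u_j\ge 0}$ with weight $e^{2N u_j z_d}$.

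After this first pass, everything is collected into a single contour integral in $z_d$ on $(\delta)$ of the form
\begin{equation}
\sum_{d\in B}\frac{2}{2\pi i}\int_{(\delta)} -e^{-2N z_d}\, z(-2z_d)\, \Bigl(\prod_{i\in A}(-1)\, \hat f_i(u_i)\Bigr)\Bigl(\prod_{j\in B\setminus\{d\}}\hat f_j(u_j)\Bigr) f_d(\tfrac{N}{\pi} i z_d)\, e^{2N z_d\bigl(\sum_{j\in B\setminus\{d\}} u_j - \sum_{i\in A} u_i\bigr)}\, dz_d,
\end{equation}
integrated against $\prod du_i$ over the positivity region. Writing $f_d$ again via its Fourier transform introduces $u_d$, and the total exponent in $z_d$ becomes $2N z_d\bigl(u_d - 1 + \sum_{j\in B\setminus\{d\}} u_j - \sum_{i\in A} u_i\bigr)$ after absorbing the explicit $e^{-2N z_d}$. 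The remaining $z_d$-integral $\frac{1}{2\pi i}\int_{(\delta)} z(-2z_d)\, e^{2N z_d(\cdots)}\,dz_d$ is again evaluated by a contour shift: $z(-2x)=\frac{1}{1-e^{2x}}$ has a simple pole at $x=0$ with residue $-\tfrac12$, giving a factor $-\tfrac12$ times the indicator that the exponent coefficient is negative, i.e. $u_d + \sum_{j\in B\setminus\{d\}} u_j - \sum_{i\in A} u_i < 1$, which upon summing $u_d$ back into $B$ is exactly the constraint $\sum_{i\in A} u_i \le (\sum_{i\in B} u_i) - 1$. Tracking the constants — the overall $2$ in front, the $2^{|A|}$ and $2^{|B\setminus\{d\}|}=2^{|B|-1}$ from the prefactors of the inner integrals, the $(-1)^{|A|}$ from the minus signs on the $A$-integrals, the $(-1)$ from $-e^{-2N z_d}$, and the $-\tfrac12$ residue — should assemble to $\frac{-1}{2}\,2^{|A\cup B|}(-1)^{|B|}$ once one checks $(-1)^{|A|}\cdot(-1)\cdot(-\tfrac12)\cdot 2 = (-1)^{|A|+1}\cdot \tfrac12$ against the claimed $(-1)^{|B|}\cdot(-\tfrac12)$; since in the surviving geometry $|A|$ and $|B|$ differ in the way forced by the support condition, I expect the signs to reconcile, but this bookkeeping must be done carefully.

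The main obstacle is not any single hard estimate but rather the \emph{uniformity and interchange-of-limits} issues hidden in the above: one must justify that pushing the contours and taking $N\to\infty$ commutes with the $u$-integrations, that the contributions from the non-principal poles $2\pi i k$, $k\neq 0$, of $z'/z$ and $z(-2x)$ genuinely vanish in the limit (using the rapid decay of $\hat f_i$ in horizontal strips, i.e. property (e)), and that the error terms from replacing each inner integral by its leading residue do not accumulate when multiplied across $|A\cup B|$ factors. A clean way to organize this is to prove the one-variable asymptotic lemma with an explicit error bound of the form $O(e^{-2N\delta \eta})$ for some $\eta>0$ depending on the support gap, uniformly in the auxiliary parameters, and then multiply; the support hypothesis $\sum |u_i| < 2$ (here appearing as the constraint that makes $|D|<q=2$ the only surviving case in the parent formula) is exactly what guarantees such a gap $\eta>0$ exists. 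The sign and power-of-two bookkeeping is the other place where an error is easy to make, so I would verify it on the smallest cases $|B|=1$ (so $d$ is forced and the $B\setminus\{d\}$ product is empty) and $|A|=0$ before asserting the general formula.
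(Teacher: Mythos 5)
Your proposal assembles the right tools (Fourier inversion, contour shifts, residues at $z=0$ and at $z_i=\mp z_d$), and its first stage broadly parallels the paper's, but it misses the central difficulty of this lemma: the structure of the sum over $d\in B$. For $j\in B\backslash\{d\}$ the relevant pole is at $z_j=z_d$, which lies in the \emph{right} half-plane on the same vertical line as the $z_d$-contour. Each individual term of the sum over $d$ genuinely has these poles (they cancel only when the full sum over $d$ is taken), so before any residue computation one must separate the contours, say $0<\delta_1<\cdots<\delta_m$; after that, whether closing the $z_j$-contour (to the right for $u_j<0$, to the left for $u_j>0$) encloses the pole at $z_j=z_d$ depends on whether $j<d$ or $j>d$. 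Your blanket claim that each $j\in B\backslash\{d\}$ contributes $\hat f_j(u_j)\mathbf{1}_{u_j\ge0}$ with weight $e^{2Nu_jz_d}$ is correct only when $d$ is the smallest element of $B$; for other $d$ the variables $j\in B$ with $j<d$ contribute from the region $u_j\le 0$ instead, with an extra sign from the clockwise orientation.

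Consequently, if each term of the sum over $d$ gave the same main term, your final answer would be too large by a factor of $|B|$. What actually happens --- and what occupies most of the paper's proof --- is that each $d$ produces a $d$-dependent pair of terms (obtained by splitting the $u_d$-line at $0$ and reflecting $u_d\to-u_d$ on the negative half), and these cancel telescopically between consecutive elements of $B$, leaving exactly one copy of the claimed right-hand side. This cancellation mechanism is absent from your proposal, and without it the sign and counting bookkeeping you defer to the end cannot come out right. There is also a small internal inconsistency in your $z_d$-step: the residue at $z_d=0$ is picked up when the contour can be closed to the \emph{left}, i.e.\ when the exponent coefficient $u_d-1+\sum_{j\in B\backslash\{d\}}u_j-\sum_{i\in A}u_i$ is \emph{positive}; as written, your indicator condition is the complement of the constraint you then assert. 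Your suggestion to keep $N$ in the exponentials and control the non-principal poles at $2\pi ik$ by explicit error bounds is a workable alternative to the paper's rescaling $z\to\pi z/N$ followed by the small-argument asymptotics of $z$ and $z'/z$, but it does not remove the need for the contour-ordering and telescoping argument above.
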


The final lemma is needed only when support is extended to $\sum_{i=1}^n |u_i|<3$.

\begin{lemma} \label{lem:5}
For  disjoint sets $A_1\cup B_1\cup A_2 \cup B_2=\{1,\ldots,m\}$ with $|B_i|\geq 1$ and $B\backslash \{d\}$ denoting the set $B$ excluding the element $d$, and with test functions having the properties in (\ref{eq:testfunction}),
\begin{eqnarray}
&& \lim _{N\rightarrow \infty}\sum_{d\in B_1}\sum_{g \in B_2}\frac{4}{(2\pi i)^2}\int_{(\delta)}\int_{(\delta)} \left(e^{-2N(z_d+z_g)}\frac{z(z_d+z_g) z(-z_d-z_g)z(-2z_d)z(-2z_g)} {z(z_d-z_g) z(z_g-z_d)}\right) \nonumber\\
&&\qquad\qquad \qquad\qquad\times \left( \prod_{i\in A_1} \frac{2}{2\pi i}\int_{(\delta)}-\frac{z'}{z}(z_i+z_d)f_i\Big( \tfrac{N}{\pi}iz_i\Big) dz_i\right) \nonumber \\
&&\qquad\qquad\qquad\qquad\times\left(\prod_{j\in B_1\backslash\{d\}}\frac{2}{2\pi i} \int_{(\delta)} \frac{z'}{z}(z_j-z_d)f_j\Big( \tfrac{N}{\pi}iz_j\Big) dz_j \right)\nonumber \\
&&\qquad\qquad\qquad\qquad \times \left( \prod_{i\in A_2} \frac{2}{2\pi i}\int_{(\delta)}-\frac{z'}{z}(z_i+z_g)f_i\Big( \tfrac{N}{\pi}iz_i\Big) dz_i\right) \nonumber \\
&&\qquad\qquad\qquad\qquad\times\left(\prod_{j\in B_2\backslash\{g\}}\frac{2}{2\pi i} \int_{(\delta)} \frac{z'}{z}(z_j-z_g)f_j\Big( \tfrac{N}{\pi}iz_j\Big) dz_j \right)\nonumber \\
&&\qquad\qquad\qquad\qquad\times f_d\Big( \tfrac{N}{\pi}iz_d\Big) f_g\Big(\tfrac{N}{\pi} iz_g\Big)dz_d dz_g\left. \right) \nonumber \\
&&=2^{m}(-1)^{|B_1\cup B_2|} \int_{\substack{(\mathbb{R}\geq 0)^{|A_1\cup B_1\cup A_2 \cup B_2|} \\ \sum_{i\in A_1}u_i\leq (\sum_{i\in B_1} u_i)-1\\ \sum_{i\in A_2}u_i\leq (\sum_{i\in B_2} u_i)-1}}\left( \frac{1}{4}-\left[\left(-\sum_{i\in A_2}u_i+\sum_{j\in B_2} u_j-1\right)\right.\right.\nonumber \\
&&\qquad\qquad \times\left.\left. \delta\left(-\sum_{i\in A_1\cup B_2}u_i+\sum_{j\in B_1\cup A_2} u_j\right)\right]\right) \prod_{i\in \{1,\ldots,m\}} \hat{f_i}(u_i) \prod_{i\in \{1,\ldots,m\}} du_i.
\end{eqnarray}
\end{lemma}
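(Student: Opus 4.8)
The plan is to read the prefactor in the statement as a ``doubled'' version of the one in Lemma~\ref{lem:4}. Using $z(a)z(-a)=1/(2-e^{a}-e^{-a})$ one checks, with $x=e^{-z_d},y=e^{-z_g}$, that
\[
\frac{z(z_d+z_g)z(-z_d-z_g)}{z(z_d-z_g)z(z_g-z_d)}=\left(\frac{e^{-z_d}-e^{-z_g}}{1-e^{-z_d-z_g}}\right)^{2}=:G(z_d,z_g),
\]
so the full prefactor equals $\bigl(-e^{-2Nz_d}z(-2z_d)\bigr)\bigl(-e^{-2Nz_g}z(-2z_g)\bigr)\,G(z_d,z_g)$, i.e. the product of the two single-pivot prefactors of Lemma~\ref{lem:4} times the ``gluing factor'' $G$. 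I would first dispose of the four inner blocks of integrals (over $A_1$, $B_1\setminus\{d\}$, $A_2$ and $B_2\setminus\{g\}$) by the routine used to prove Lemma~\ref{lem:4}: insert $f_i(\tfrac N\pi iz_i)=\int\hat f_i(u_i)e^{2Nz_iu_i}\,du_i$, expand $-\tfrac{z'}{z}(z_i+z_d)=\sum_{k\ge1}e^{-k(z_i+z_d)}$ and $\tfrac{z'}{z}(z_j-z_d)=-\sum_{k\ge1}e^{-k(z_j-z_d)}$ (with the usual infinitesimal separation of the $z_j$- and $z_d$-contours so the arguments have positive real part), and carry out each $z_i$-integral, which collapses to $\tfrac1{2N}\hat f_i(k_i/2N)$ times a single exponential $e^{\mp k_iz_d}$ (or $e^{\mp k_iz_g}$). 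After this only the $z_d$- and $z_g$-integrals remain.

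Next I would expand the surviving factors: $-e^{-2Nz_d}z(-2z_d)=\sum_{m\ge1}e^{-2(N+m)z_d}$ (and likewise with $m'$ for $z_g$), and
\[
G(z_d,z_g)=\bigl(e^{-2z_d}-2e^{-(z_d+z_g)}+e^{-2z_g}\bigr)\sum_{l\ge0}(l+1)e^{-l(z_d+z_g)}.
\]
Doing the $z_d$- and $z_g$-integrals then produces two Dirac deltas pinning $u_d,u_g$ to $\tfrac1{2N}$ times the total exponential rate, namely $2Nu_d=2(N+m)+(\text{$z_d$-rate of the chosen $G$ monomial})+\sum_{A_1}k_i-\sum_{B_1\setminus\{d\}}k_j$ and similarly for $u_g$. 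The three monomials of $(e^{-z_d}-e^{-z_g})^2$ carry $(z_d,z_g)$-rates $(l+2,l),(l+1,l+1),(l,l+2)$ with coefficients $+1,-2,+1$, so the $G$-part of the integrand is, after the deltas are used, the \emph{second difference}
\[
(l+1)\bigl[\psi(l+2,l)-2\psi(l+1,l+1)+\psi(l,l+2)\bigr],\qquad \psi(a,b):=\hat f_d\!\bigl(\tfrac{\rho_d+a}{2N}\bigr)\hat f_g\!\bigl(\tfrac{\rho_g+b}{2N}\bigr),
\]
with $\rho_d,\rho_g$ collecting the other rates. This is the heart of the matter and where I expect the main work to lie: the bracket is $O(N^{-2})$ because its $O(1)$ and $O(N^{-1})$ Taylor terms cancel identically, so the would-be $O(N^{2})$ and $O(N)$ contributions of the $l$-sum are annihilated and $\sum_l(l+1)[\cdots]$ is $O(1)$; establishing this cancellation together with the uniformity needed to pass to the limit is the one genuinely delicate estimate.

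Once that is in hand the limit $N\to\infty$ is bookkeeping: the $k$-sums become integrals over the Fourier variables $u_i$, the $m,m'$-sums become integrals over slacks $\mu,\mu'\ge0$, and the $l$-sum becomes $\int_0^\infty\lambda\,(\,\cdot\,)\,d\lambda$ with $\lambda$ a \emph{common} shift of both $u_d$ and $u_g$. Writing $c_d:=1+\sum_{A_1}u_i-\sum_{B_1\setminus\{d\}}u_j$ and $c_g$ analogously one has $u_d=c_d+\mu+\lambda$, $u_g=c_g+\mu'+\lambda$, and (since $\partial_\mu=\partial_{u_d}$, $\partial_{\mu'}=\partial_{u_g}$) the second difference contributes $(\partial_{u_d}-\partial_{u_g})^2[\hat f_d(u_d)\hat f_g(u_g)]$ to the integrand. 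Changing variables $(\mu,\mu',\lambda)\mapsto(u_d,u_g,\lambda)$ (unit Jacobian) one finds $\lambda$ runs over $[0,\min(P_1,P_2)]$ with $P_1:=u_d-c_d=\sum_{B_1}u_j-\sum_{A_1}u_i-1$ and $P_2:=u_g-c_g=\sum_{B_2}u_j-\sum_{A_2}u_i-1$, so $\{\mu,\mu',\lambda\ge0\}$ becomes exactly $\{P_1\ge0,\,P_2\ge0\}$ (the domain in the statement) and the $\lambda$-integral leaves the weight $\tfrac12\min(P_1,P_2)^{2}$. Finally integrate by parts in $u_d,u_g$ (equivalently $P_1,P_2$): all boundary terms vanish ($\tfrac12\min(P_1,P_2)^2$ and its first derivatives vanish on $\{P_1=0\}\cup\{P_2=0\}$ and the $\hat f$'s are compactly supported), so the derivatives fall on the weight,
\[
(\partial_{P_1}-\partial_{P_2})^2\,\tfrac12\min(P_1,P_2)^{2}=1-4\min(P_1,P_2)\,\delta(P_1-P_2)=4\Bigl(\tfrac14-P_2\,\delta(P_1-P_2)\Bigr),
\]
and since $P_1-P_2=\sum_{j\in B_1\cup A_2}u_j-\sum_{i\in A_1\cup B_2}u_i$ this reproduces exactly the bracket in the statement. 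A count of the explicit constants ($4$ from $4/(2\pi i)^2$, $(2N)^{-2}$ from the two $u$-deltas, $N^{-1}$ per inner block, the Riemann-sum normalisations $N$ for each $m$-sum and $2N$ for each $k$-sum, together with the $4$ just produced) gives the overall $2^{m}$, and the signs combine to $(-1)^{|B_1|-1}(-1)^{|B_2|-1}=(-1)^{|B_1\cup B_2|}$, completing the identification. Besides the second-difference cancellation, the only point needing care — exactly as in Lemma~\ref{lem:4} — is the contour placement for the factors $\tfrac{z'}{z}(z_j-z_{\mathrm{pivot}})$, whose arguments a priori lie on the imaginary axis.
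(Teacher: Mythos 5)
Your route is genuinely different from the paper's: the paper first rescales $z_i\to\tfrac{\pi}{N}z_i$ and replaces $z$, $z'/z$ by their leading singular parts, then evaluates everything by residue calculus in the continuous Fourier variables (the double pole at $z_g=-z_d$ producing the linear factor, and the $\tfrac14$ coming from the residue at $z_g=0$); you instead keep $N$ finite, expand everything as Dirichlet series, and take the limit of Riemann sums at the end. Your second-difference identity $(\partial_{P_1}-\partial_{P_2})^2\tfrac12\min(P_1,P_2)^2=1-4\min(P_1,P_2)\,\delta(P_1-P_2)$ is a genuinely illuminating way to see where the bracket $\tfrac14-P_2\,\delta(P_1-P_2)$ comes from, and it checks out against the stated right-hand side.

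However, there is a real gap in your treatment of the double sum over pivots $d\in B_1$, $g\in B_2$, which you defer to a closing remark about "contour placement". Each individual $(d,g)$-term has poles at $z_j=z_d$ for $j\in B_1\setminus\{d\}$ (and at $z_j=z_g$ for $j\in B_2\setminus\{g\}$), so its value depends on the relative ordering of the contours; only the full double sum is ordering-independent. Your expansion $\tfrac{z'}{z}(z_j-z_d)=-\sum_{k\geq1}e^{-k(z_j-z_d)}$ requires $\Re(z_j-z_d)>0$ for \emph{every} $j\in B_1\setminus\{d\}$, and this cannot hold simultaneously for two different pivots $d,d'\in B_1$ on any fixed arrangement; choosing a separate, pivot-adapted arrangement for each term changes the value of the sum by the residues at the crossed poles. (A two-element test, $B_1=\{1,2\}$, $A_1=\emptyset$ in the Lemma \ref{lem:4} setting, already shows that summing the per-pivot-arranged terms gives $-2\bigl[2\int_{u_1+u_2\geq1}+\int_{u_2-u_1\geq1}+\int_{u_1-u_2\geq1}\bigr]$ rather than the correct $-2\int_{u_1+u_2\geq1}$.) With a single fixed ordering $0<\delta_1<\cdots<\delta_m$, the factors with $j$ to the left of the pivot must be expanded as $\sum_{k\geq0}e^{k(z_j-z_d)}$, which pins the corresponding $u_j$ to nonpositive values and produces a family of extra terms; showing that these cancel in fours across the $(d,g)$-sum, leaving exactly one copy of your "leading" term (the one with $d=\min B_1$, $g=\min B_2$), is precisely the combinatorial argument that occupies the second half of the paper's proof. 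Without it your final constant count, which implicitly assumes the double sum contributes the leading term exactly once, is unjustified. The second point you flag yourself — uniformity of the $O(N^{-2})$ second-difference bound needed to interchange the $l$-sum with the limit — is a smaller matter, but it too is asserted rather than proved.
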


\section{Proof of key lemmas}

We will now present the proof of the key lemmas.  The first three are fairly straightforward, but we will prove each for completeness. It is useful to note the following behaviour for $x$ of small magnitude:

\begin{eqnarray}
z(x)&:=&\frac{1}{1-e^{-x}} \sim \frac{1}{x}+ O(1),\label{eq:zasymp}\\
\frac{z'}{z}(x)&\sim& -\frac{1}{x} +O(1),\label{eq:zlogderivasymp}\\
\left(\frac{z'}{z}\right)'(x) & \sim& \frac{1}{x^2}+O(x^{-1}).\label{eq:zdoublederivasymp}
\end{eqnarray}

\begin{proof}[Proof of Lemma \ref{lem:1}]
We start with the left hand side of the statement of Lemma \ref{lem:1} and perform a change of variables:
\begin{eqnarray}
&&\lim_{N\rightarrow \infty} \frac{4}{(2\pi i)^2} \int_{(\delta)} \int_{(\delta)} \left( \frac{z'}{z}\right)' (z_1+z_2)f_1(\tfrac{N}{\pi} iz_1) f_2(\tfrac{N}{\pi} iz_2) dz_1 dz_2 \nonumber \\
&&=\lim_{N\rightarrow \infty} \frac{4}{(2N i)^2} \int_{(\tfrac{N\delta}{\pi})} \int_{(\tfrac{N\delta}{\pi})} \left( \frac{z'}{z}\right)' (\tfrac{\pi}{N}z_1+\tfrac{\pi}{N}z_2)f_1( iz_1) f_2( iz_2) dz_1 dz_2.
\end{eqnarray}
Since the $f_j$ have no poles in the right half-plane and $\left( \frac{z'}{z}\right)' (x)$ has a pole only at zero, the contours can be shifted back to $(\delta)$ for convenience.  Here and elsewhere we can ignore horizontal segments of the contour of integration when shifting contours as these will be negligible if the $f_j$ are Schwartz functions, or otherwise decay sufficiently fast in horizontal strips (that is, when the absolute value of the real part of the argument gets large).  

Now we apply the large $N$ limit and use (\ref{eq:zdoublederivasymp}) to obtain
\begin{eqnarray}
&&\frac{4}{(2\pi i)^2} \int_{(\delta)}\int_{(\delta)}  \frac{1}{(z_1+z_2)^2} f_1(iz_1)f_2(iz_2)dz_1 dz_2\\
&&=\int_{-\infty}^{\infty} \int_{-\infty}^{\infty}\hat{f_1}(u_1) \hat{f_2}(u_2) \frac{4}{(2\pi i)^2} \int_{(\delta)} \int_{(\delta)} \frac{1}{(z_1+z_2)^2} e^{2\pi z_1u_1+2\pi z_2u_2} dz_1dz_2du_1du_2.\nonumber
\end{eqnarray}
where we have inserted the definition of the Fourier transform of the $f_j$s from (\ref{eq:fourier}). 

We now notice that if either of the variables $u_1$ or $u_2$ is negative, then we could close the corresponding $z$ integral in the right half plane with a contour on the vertical line with real part $R$ and as $R\rightarrow \infty$ the contribution of closing the contour would go to zero due to the factor $e^{2\pi z_ju_j}$.  As there are no poles of the integrand in the right half plane, this means that if either of the variables $u_1$ or $u_2$ is negative then the whole integral is zero.

Thus we are left looking at
\begin{equation}\label{eq:pospos}
\int_{0}^{\infty} \int_0^\infty  \hat{f_1}(u_1) \hat{f_2}(u_2) \frac{4}{(2\pi i)^2} \int_{(\delta)} \int_{(\delta)} \frac{1}{(z_1+z_2)^2} e^{2\pi z_1u_1+2\pi z_2u_2} dz_1dz_2du_1du_2.
\end{equation}
As the $u_j$s are positive, closing the $z_2$ contour to the far left will result in negligible contribution from that contour of integration.  However, the contour will enclose the pole at $z_2=-z_1$, resulting in a residue
\begin{equation}
\Res_{z_2=-z_1} \left( \frac{1}{(z_1+z_2)^2} e^{2\pi z_1u_1+2\pi z_2u_2}\right)=2\pi u_2 e^{2\pi z_1(u_1-u_2)}.
\end{equation}
Thus (\ref{eq:pospos})  becomes
\begin{eqnarray}
&&\int_{0}^{\infty} \int_0^\infty  \hat{f_1}(u_1) \hat{f_2}(u_2) \frac{4}{2\pi i} \int_{(\delta)} 2\pi u_2 e^{2\pi z_1(u_1-u_2)} dz_1du_1du_2 \nonumber \\
&&=4\int_{0}^{\infty} \int_0^\infty  \hat{f_1}(u_1) \hat{f_2}(u_2) u_2 \delta(u_1-u_2) du_1 du_2,
\end{eqnarray}
where we have shifted the $z_1$ contour onto the imaginary axis, made a change of variables $z_1\rightarrow -iz_1$, and then used the expression for a delta-function
\begin{equation}\label{eq:delta}
\delta(x)=\int_{-\infty}^{\infty} e^{-2\pi i y x } dy.
\end{equation}
Thus we have that (\ref{eq:pospos}) is
\begin{eqnarray}
4\int_0^\infty \hat{f_1}(u)\hat{f_2}(u) u du&=& 2\int_{-\infty}^\infty \hat{f_1}(u)\hat{f_2}(u) |u| du, 
\end{eqnarray}
using the property that $\hat{f_j}$ are even functions. 
\end{proof}

\begin{proof}[Proof of Lemma \ref{lem:2}]
We start with the left hand side of Lemma \ref{lem:2}, perform the same change of variables and the same procedure for taking the $N\rightarrow \infty$ limit, although this time using (\ref{eq:zlogderivasymp}), as in the proof of Lemma \ref{lem:1}.  This leads us to
\begin{eqnarray}
&&
\lim_{N\rightarrow \infty} \frac{2}{2\pi i} \int_{(\delta)} \frac{z'}{z}(2z) f(\tfrac{N}{\pi} i z) dz \nonumber \\
&&=\frac{-1}{2\pi i} \int_{(\delta)} \frac{1}{z} f(iz) dz.
\end{eqnarray}
We now write $f$ in terms of its Fourier transform (\ref{eq:fourier}).  So, the above equation equals
\begin{equation}
\frac{-1}{2\pi i}\int_{-\infty}^\infty \hat{f}(u) \int_{(\delta)}\frac{1}{z} e^{2\pi zu} dz du.
\end{equation}
If $u<0$ then the $z$ contour could be closed far to the right and would give a zero contribution. So, we look at
\begin{equation}
\frac{-1}{2\pi i}\int_{0}^\infty \hat{f}(u) \int_{(\delta)}\frac{1}{z} e^{2\pi zu} dz du,
\end{equation}
and close the $z$ contour far to the left, picking up a pole at $z=0$ with residue 1. 

Thus our final result is
\begin{equation}
-\int_0^\infty \hat{f}(u) du, 
\end{equation}
which, due to $\hat{f}$ being even, equals
\begin{equation}
-\frac{1}{2} \int_{-\infty}^{\infty} \hat{f}(u) du.
\end{equation}
\end{proof}

\begin{proof}[Proof of Lemma \ref{lem:3}]
This is very straightforward indeed. We merely perform a change of variable $z=- \pi i y/N$ to see that
\begin{equation}
\frac{2N}{2\pi i} \int_{(-\delta)} f(\tfrac{N}{\pi} iz) dz = \int_{-\infty}^{\infty} f(y)dy.
\end{equation}
\end{proof}

The three lemmas above are all that are needed to compare with Rubinstein's result with support on $\sum_{i=1}^n |u_i|<1$ in Section \ref{sect:support1}.

\begin{proof}[Proof of Lemma \ref{lem:4}]
We write out the left hand side of the equality in the lemma
\begin{eqnarray}\label{eq:lem4a}
&&\lim_{N\rightarrow \infty} \sum_{d\in B} \frac{2}{2\pi i} \int_{(\delta)}  -e^{-2Nz_d} z(-2z_d) \left( \prod_{i\in A} \frac{-2}{2 \pi i} \int_{(\delta)}\frac{z'}{z}(z_i+z_d) f_i\Big(\tfrac{N}{\pi} i z_i\Big)  dz_i\right) \nonumber \\
&&\times \left( \prod_{j\in B\backslash \{d\}} \frac{2}{2\pi i} \int_{(\delta)} \frac{z'}{z} (z_j-z_d)f_j\Big(\tfrac{N}{\pi} i z_j\Big) dz_j \right) f_d\Big(\tfrac{N}{\pi} i z_d\Big) dz_d.
\end{eqnarray}
We perform the same steps as at the start of the proof of Lemma \ref{lem:1}.  That is, we change variables $z_i\rightarrow \tfrac{\pi}{N}z_i$ and then for large $N$ approximate the $z$ functions asymptotically using (\ref{eq:zasymp}) and (\ref{eq:zlogderivasymp}).   Thus we arrive at
\begin{eqnarray}\label{eq:lem4aa}
&&\sum_{d\in B}\frac{2}{2\pi i}  \int_{(\delta)} \left(-e^{-2\pi z_d}\frac{-1}{2z_d} \right) \nonumber \\
&&\times \left( \prod_{i\in A} \frac{2}{2 \pi i} \int_{(\delta)} \frac{1}{z_i+z_d}f_i(iz_i) dz_i\right) \left( \prod_{j\in B\backslash \{d\}} \frac{2}{2\pi i} \int_{(\delta)} \frac{-1}{z_j-z_d } f_j(iz_j)dz_j\right) f_d(iz_d)dz_d .
\end{eqnarray}
Note that after the change of variables we have brought all the contours back to their original positions.  This does not have to be done carefully because although there appear to be poles when $z_j=z_d$ for each $j\in B$, in fact they all cancel out.  Consider, for example, $z_1$ and $z_2$ when $1,2\in B$.  When $d=1$, 
\begin{eqnarray}
&&\Res_{z_1=z_2} \left( -e^{-2\pi z_1} \frac{-f_1(iz_1)}{2z_1} \times \prod_{i\in A} \frac{f_i(iz_i)}{z_i+z_1} \times \prod_{j\in B\backslash\{1,2\}}\frac{-f_j(iz_j)}{z_j-z_1} \times \frac{-f_2(iz_2)}{z_2-z_1} \right)\nonumber \\
&&\qquad= -e^{-2\pi z_2} \frac{-f_1(iz_2)}{2z_2} \times \prod_{i\in A} \frac{f_i(iz_i)}{z_i+z_2} \times \prod_{j\in B\backslash\{1,2\}}\frac{-f_j(iz_j)}{z_j-z_2} f_2(iz_2) ,
\end{eqnarray}
whereas when $d=2$,
\begin{eqnarray}
&&\Res_{z_1=z_2} \left( -e^{-2\pi z_2} \frac{-f_2(iz_2)}{2z_2} \times \prod_{i\in A} \frac{f_i(iz_i)}{z_i+z_2} \times \prod_{j\in B\backslash\{1,2\}}\frac{-f_j(iz_j)}{z_j-z_2} \times \frac{-f_1(iz_1)}{z_1-z_2} \right)\nonumber \\
&&\qquad= e^{-2\pi z_2} \frac{-f_2(iz_2)}{2z_2} \times \prod_{i\in A} \frac{f_i(iz_i)}{z_i+z_2} \times \prod_{j\in B\backslash\{1,2\}}\frac{-f_j(iz_j)}{z_j-z_2}f_1(iz_2) .
\end{eqnarray}
Thus these two residues cancel each other out. 

The fact that there is no pole for any $z$ in the right half plane also means that we can spread the contours slightly.  We will define a sequence $0<\delta_1<\delta_2<\cdots<\delta_m$ and $z_i$ will be integrated on the vertical line at $\delta_i$.

We now substitute each $f$ with its definition in terms of its Fourier transform.  After these manipulations, (\ref{eq:lem4a}) equals
\begin{eqnarray}\label{eq:lem4b}
&&\sum_{d\in B}\frac{2}{2\pi i} \int_{-\infty}^\infty \cdots \int_{-\infty}^{\infty} \left( \prod_{i\in A} \hat{f_i}(u_i)\right) \left( \prod_{j\in B}\hat{f_j}(u_j) \right) \int_{(\delta_d)} \left(-e^{2\pi z_d(u_d-1)}\frac{-1}{2z_d} \right) \nonumber \\
&&\times \left( \prod_{i\in A} \frac{2}{2 \pi i} \int_{(\delta_i)} \frac{e^{2\pi u_iz_i}}{z_i+z_d} dz_i\right) \left( \prod_{j\in B\backslash \{d\}} \frac{2}{2\pi i} \int_{(\delta_j)} \frac{-e^{2\pi u_jz_j}}{z_j-z_d } dz_j\right) dz_d du_1 \cdots du_m.
\end{eqnarray}

We consider first the term in the sum over $d$ where $d<j, \forall j\in B\backslash\{d\}$.  So, $d$ is the smallest element in $B$. For convenience we will label the elements of $B$ in ascending order: $j_1< j_2<\ldots< j_{|B|}$.  The integrals over the $u$ variables run from $-\infty$ to $\infty$ but we note that the behaviour of the exponentials  $\exp(2\pi u_i z_i)$ and $\exp(2\pi u_jz_j)$ depends on whether the $u$ variable is positive or negative.  Thus we consider separately the case where each $u$ is greater than or less than zero.  We observe that if any $u_j<0$,  $j\in A\cup B\backslash\{j_1\}$, then for the term corresponding to  $d=j_1$ in (\ref{eq:lem4b}) the integral over the corresponding $z_j$, $j\in A,$ or $z_j$, $j\in B\backslash\{j_1\},$ can be closed to the right and the contribution incurred by closing the contour will be zero since $e^{2\pi u_j z_j}$ goes to zero for $z_j$ with large real part and $u_j<0$.  We also note that in closing the contour to the right, we don't enclose any poles (because the $z_{j_1}$ contour is to the left of all the other contours and so no pole $z_j=z_{j_1}$ would be encountered).   See Figure \ref{fig:contours} for a sketch of how the contours are arranged (it shows the general case where $d$ is not necessarily $j_1$).  Thus the contribution to the term corresponding to $d=j_1$ is
\begin{eqnarray}\label{eq:lem4c}
&&\frac{2}{2\pi i} \int_{-\infty}^\infty \; \int_{0}^\infty \cdots \int_{0}^{\infty} \left( \prod_{i\in A} \hat{f_i}(u_i)\right) \left( \prod_{j\in B}\hat{f_j}(u_j) \right) \int_{(\delta_{j_1})} \left(-e^{2\pi z_{j_1}(u_{j_1}-1)}\frac{-1}{2z_{j_1}} \right) \nonumber \\
&&\times \left( \prod_{i\in A} \frac{2}{2 \pi i} \int_{(\delta_i)} \frac{e^{2\pi u_iz_i}}{z_i+z_{j_1}} dz_i\right) \left( \prod_{j\in B\backslash \{j_1\}} \frac{2}{2\pi i} \int_{(\delta_j)} \frac{-e^{2\pi u_jz_j}}{z_j-z_{j_1} } dz_j\right) dz_{j_1} \left(\prod_{j\in A\cup B\backslash \{j_1\}} du_j \right)du_{j_1}.
\end{eqnarray}

Now we are left with $u_j>0$ for all $j\neq j_1$.  We can therefore close all the contours corresponding to $z_i$, $i\in A,$ and $z_j$, $j\in B\backslash\{j_1\},$ to the left.  This picks up residues at $z_i=-z_{j_1}$, for $i\in A$, and at $z_j=z_{j_1}$, for $j\in B\backslash \{{j_1}\}$.  The result is that (\ref{eq:lem4c}) equals
\begin{eqnarray}\label{eq:lem4c1}
&& = \frac{2^{|A\cup B|}}{2\pi i} \int_{-\infty}^\infty \; \int_{(\mathbb{R}\geq0)^{|B\backslash\{j_1\}|}}\left( \prod_{i\in A} \hat{f_i}(u_i)\right) \left( \prod_{j\in B}\hat{f_j}(u_j) \right) \int_{(\delta_{j_1})} \left(-e^{2\pi z_{j_1}(u_{j_1}-1)}\frac{-1}{2z_{j_1}} \right) \nonumber \\
&&\qquad
\times \left(\prod_{i\in A} e^{-2\pi u_iz_{j_1}}\right) \left( \prod_{j\in B\backslash \{j_1\}} -e^{2\pi u_jz_{j_1}}\right) dz_{j_1} \left(\prod_{j\in A\cup B\backslash \{j_1\}} du_j \right)du_{j_1}.
\end{eqnarray}

Note that if $\sum_{i\in A} u_i>(\sum_{j\in B} u_j)-1$ then $\exp(2\pi z_{j_1}(-\sum_{i\in A} u_i+(\sum_{j\in B} u_j)-1))$ will decay on a loop closing the $z_{j_1}$ contour far to the right, yielding no contribution at all.   However, if $\sum_{i\in A} u_i\leq(\sum_{j\in B} u_j)-1$ then we close the $z_{j_1}$ contour to the left, enclosing the residue at $z_{j_1}=0$.  Thus we have that (\ref{eq:lem4c1}) equals
\begin{eqnarray}\label{eq:lem4d}
&&  2^{|A\cup B|} (-1)^{|B|}\left(\frac{-1}{2}\right) \int_{-\infty}^\infty \; \int_{\substack{(\mathbb{R}\geq 0)^{|B\backslash \{j_1\}|}\\\sum_{i\in A} u_i\leq (\sum_{j\in B} u_j)-1} }\left( \prod_{i\in A\cup B} \hat{f_i}(u_i)\right)   \left(\prod_{j\in A\cup B\backslash \{j_1\}} du_j \right)du_{j_1}\nonumber \\
&&= 2^{|A\cup B|} (-1)^{|B|}\left(\frac{-1}{2}\right) \int_{0}^\infty \; \int_{\substack{(\mathbb{R}\geq 0)^{|B\backslash \{j_1\}|}\\\sum_{i\in A} u_i\leq (\sum_{j\in B} u_j)-1} }\left( \prod_{i\in A\cup B} \hat{f_i}(u_i)\right)   \left(\prod_{j\in A\cup B\backslash \{j_1\}} du_j \right)du_{j_1}\nonumber \\
&&+ 2^{|A\cup B|} (-1)^{|B|}\left(\frac{-1}{2}\right) \int_{-\infty}^0 \; \int_{\substack{(\mathbb{R}\geq 0)^{|B\backslash \{j_1\}|}\\\sum_{i\in A} u_i\leq (\sum_{j\in B} u_j)-1} }\left( \prod_{i\in A\cup B} \hat{f_i}(u_i)\right)   \left(\prod_{j\in A\cup B\backslash \{j_1\}} du_j \right)du_{j_1}\nonumber \\
&&= 2^{|A\cup B|} (-1)^{|B|}\left(\frac{-1}{2}\right)  \int_{\substack{(\mathbb{R}\geq 0)^{|B|}\\\sum_{i\in A} u_i\leq (\sum_{j\in B} u_j)-1} }\left( \prod_{i\in A\cup B} \hat{f_i}(u_i)\right)   \left(\prod_{j\in A\cup B} du_j \right)\nonumber \\
&&+ 2^{|A\cup B|} (-1)^{|B|}\left(\frac{-1}{2}\right) \int_{\substack{(\mathbb{R}\geq 0)^{|B|}\\(\sum_{i\in A} u_i)+u_{j_1}\leq (\sum_{j\in B\backslash\{j_1\}} u_j)-1} }\left( \prod_{i\in A\cup B} \hat{f_i}(u_i)\right)   \left(\prod_{j\in A\cup B} du_j \right).
\end{eqnarray}
For the final equality we have made a change of variable in the second integral $z_{j_1}\rightarrow -z_{j_1}$.  As $f_{j_1}$ is even, $\hat{f_{j_1}}$ is also even, but the sign of $u_{j_1}$ changes in the inequality controlling the domain of integration.  We notice that the first term after the final equality above is exactly the answer we want to prove the lemma.  We will now show that the second term cancels out when we sum over all $d\in B$. 

\begin{figure}[h!]
\caption{A sketch of the arrangement of the contours of integration employed in evaluating (\ref{eq:lem4b}). The sign of $u_j$ determines whether the contour of integration of the corresponding $z_j$ (marked with a $j$ on the sketch) can be closed to the right or the left due to $\exp(2\pi u_jz_j)$ in the integrand being small when the exponent is large and negative.  In the sketch the shaded region indicates the region enclosed by the $z_j$ contour.  A star indicates the situations when this contour encloses a pole.  Note that we have poles at $z_j=-z_{j_d}$, for $j\in A$, meaning that it makes no difference whether the $j$ contour lies to the left or the right of the $j_d$ contour.  For $j\in B\backslash \{{j_d}\}$ we have poles at $z_j=z_{j_d}$.  }
\label{fig:contours}
\includegraphics[width=7cm]{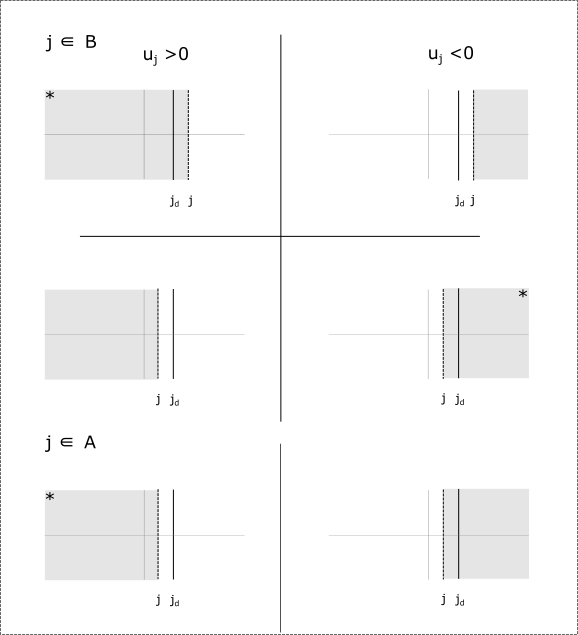}
\end{figure}

We have labelled the elements of $B$ as $j_1< j_2<\ldots< j_{|B|}$.  We now consider a term in (\ref{eq:lem4b}) corresponding to $d=j_d\neq j_1$.  So, $j_d$ is some element of $B$ other than the smallest element.  We will define $B_{<d}=B\cap \{1,\ldots,d-1\}$ and $B_{>d}=B\cap \{d+1,\ldots,m\}$.  We repeat the steps carried out for $d=j_1$, noting that each $z_i$, for $i\in A$, encounters a pole at $z_i=-z_{j_d}$ and each $z_j$, for $j\in B\backslash \{j_d\}$ encounters a pole at $z_j=z_{j_d}$.  However here we notice that for elements $j\in B_{<d}$, in the region where $u_j>0$, when we close the $z_j$-contour to the far left we enclose no poles.  For $j\in B_{<d}$  in the region where $u_j\leq 0$  when we close the contour far to the right,we pick up the pole at $z_{j}=z_{j_d}$.  This is due to the fact that before (\ref{eq:lem4b}) we spread the contours so that $0<\delta_1<\delta_2<\cdots<\delta_m$.  See Figure \ref{fig:contours} for a sketch of how the contours of integration are arranged. Thus we find that the term in (\ref{eq:lem4b}) corresponding to $d=j_d$ is
\begin{eqnarray}\label{eq:lem4e}
&& \frac{2^{|A\cup B|}}{2\pi i}\int_{(\mathbb{R}\leq0)^{|B_{<d}|}} \int_{-\infty}^\infty  \int_{(\mathbb{R}\geq0)^{|B_{>d}|}}  \left( \prod_{i\in A\cup B} \hat{f_i}(u_i)\right)  \int_{(\delta_{j_d})} \left(-e^{2\pi z_{j_d}(u_{j_d}-1)}\frac{-1}{2z_{j_d}} \right) \nonumber \\
&&\qquad
\times \left(\prod_{i\in A} e^{-2\pi u_iz_{j_d}}\right) \left( \prod_{j\in B_{<d}} e^{2\pi u_jz_{j_d}}\right)  \left( \prod_{j\in B_{>d}} -e^{2\pi u_jz_{j_d}}\right)dz_{j_d} \nonumber \\
&&\qquad \qquad \qquad \qquad\times \left(\prod_{j\in A\cup B_{>d}} du_j \right)du_{j_d}\left(\prod_{j\in A\cup B_{<d}} du_j \right)\nonumber \\
&&= \frac{2^{|A\cup B|}}{2\pi i}\int_{-\infty}^\infty  \int_{(\mathbb{R}\geq0)^{|B\backslash\{j_d\}|}}  \left( \prod_{i\in A\cup B} \hat{f_i}(u_i)\right)  \int_{(\delta_{j_d})} \left(-e^{2\pi z_{j_d}(u_{j_d}-1)}\frac{-1}{2z_{j_d}} \right) \nonumber \\
&&\qquad
\times \left(\prod_{i\in A} e^{-2\pi u_iz_{j_d}}\right) \left( \prod_{j\in B_{<d}} e^{-2\pi u_jz_{j_d}}\right)  \left( \prod_{j\in B_{>d}} -e^{2\pi u_jz_{j_d}}\right)dz_{j_d} \nonumber \\
&&\qquad \qquad \qquad \qquad\times \left(\prod_{j\in A\cup B\backslash\{j_d\}} du_j \right)du_{j_d} .
\end{eqnarray}
Note that for $j\in B_{<d}$ we close the contour to the right in a clockwise direction, resulting in the residue picking up an extra minus sign with respect to closing a contour to the left in a counter-clockwise direction (see the second line of (\ref{eq:lem4e})).  Now we just have the $z_{j_d}$ integral to perform.  We will close this contour to the far left and pick up the residue at $z_{j_d}=0$ under the condition that 
$\sum_{i\in A}u_i+\sum_{j\in B_{<d}} u_j \leq(\sum_{j \in B_{>d}} u_j)+u_{j_d}-1$.  So (\ref{eq:lem4e}) equals
\begin{eqnarray}
&& 2^{|A\cup B|} (-1)^{|B_{>d}|}\left(\frac{-1}{2}\right)\int_{-\infty}^\infty  \int_{\substack{(\mathbb{R}\geq0)^{|B\backslash\{j_d\}|}\\\sum_{i\in A}u_i+\sum_{j\in B_{<d}} u_j \leq(\sum_{j \in B_{>d}} u_j)+u_{j_d}-1 } } \left( \prod_{i\in A\cup B} \hat{f_i}(u_i)\right)  \nonumber \\
&&\qquad \qquad \qquad \qquad\times \left(\prod_{j\in A\cup B\backslash\{j_d\}} du_j \right)du_{j_d} \nonumber \\
&& =2^{|A\cup B|} (-1)^{|B_{>d}|}\left(\frac{-1}{2}\right)\int_{\substack{(\mathbb{R}\geq0)^{|B|}\\\sum_{i\in A}u_i+\sum_{j\in B_{<d}} u_j \leq(\sum_{j \in B_{>d}} u_j)+u_{j_d}-1 } } \left( \prod_{i\in A\cup B} \hat{f_i}(u_i)\right)  \nonumber \\
&&\qquad \qquad \qquad \qquad\times \left(\prod_{j\in A\cup B} du_j \right)\nonumber \\
&& +2^{|A\cup B|} (-1)^{|B_{>d}|}\left(\frac{-1}{2}\right)  \int_{\substack{(\mathbb{R}\geq0)^{|B|}\\(\sum_{i\in A}u_i+\sum_{j\in B_{<d}} u_j)+u_{j_d} \leq(\sum_{j \in B_{>d}} u_j)-1 } } \left( \prod_{i\in A\cup B} \hat{f_i}(u_i)\right)  \nonumber \\
&&\qquad \qquad \qquad \qquad\times \left(\prod_{j\in A\cup B} du_j \right).
\end{eqnarray}
For the final equality we have split the $u_{j_d}$ integral into the intervals $[0,\infty)$ and $(-\infty,0)$ and made a change of variables $u_{j_d}\rightarrow -u_{j_d}$ in the second integral.  We see that the integral corresponding to $[0,\infty)$ for $d$ cancels with the integral corresponding to $(-\infty,0)$ in the case $d-1$. Similarly, the integral corresponding to $(-\infty,0)$ for $d$ cancels with the $[0,\infty)$ term for $d+1$.  In the case that $j_d$ is the largest element of $B$, the term corresponding to $(-\infty,0)$ is zero  because there is no way to satisfy the condition $(\sum_{i\in A}u_i+\sum_{j\in B_{<d}} u_j)+u_{j_d} \leq(\sum_{j \in B_{>d}} u_j)-1$ because all the $u$s are positive and $B_{>d}$ is the empty set. 

Thus the only term that survives is 
\begin{equation}
2^{|A\cup B|} (-1)^{|B|}\left(\frac{-1}{2}\right)  \int_{\substack{(\mathbb{R}\geq 0)^{|B|}\\\sum_{i\in A} u_i\leq (\sum_{j\in B} u_j)-1} }\left( \prod_{i\in A\cup B} \hat{f_i}(u_i)\right)   \left(\prod_{j\in A\cup B} du_j \right)
\end{equation} from (\ref{eq:lem4d}) and this proves the lemma.
\end{proof}

The final lemma is only needed when support is extended to $\sum_{i=1}^n |u_i|<3$.

\begin{proof}[Proof of Lemma \ref{lem:5}]
The proof of this lemma goes through exactly the same steps as the proof of Lemma \ref{lem:4}.  Taking the large $N$ limit, spreading the contours of integration so that $0<\delta_1<\cdots<\delta_m$ (noting as before that there are no poles at $z_i=z_j$ when the whole double sum is considered), and then replacing the $f$'s with their expression in terms of their Fourier transforms we find the left hand side of the statement of the lemma is
\begin{eqnarray}\label{eq:lem5a}
&&\sum_{d\in B_1}\sum_{g \in B_2}\frac{4}{(2\pi i)^2}\int_{(\delta_d)}\int_{(\delta_g)} \left(-e^{-2\pi(z_d+z_g)}\frac{(z_d-z_g) (z_g-z_d)} {(z_d+z_g) (z_d+z_g)(2z_d)(2z_g)}\right) \nonumber\\
&&\qquad\times \left( \prod_{i\in A_1} \frac{2}{2\pi i}\int_{(\delta_i)}\frac{1}{z_i+z_d}f_i( iz_i) dz_i\right)\times\left(\prod_{j\in B_1\backslash\{d\}}\frac{2}{2\pi i} \int_{(\delta_j)} \frac{-1}{z_j-z_d}f_j(iz_j) dz_j \right)\nonumber \\
&&\qquad\times \left( \prod_{i\in A_2} \frac{2}{2\pi i}\int_{(\delta_i)}\frac{1}{z_i+z_g}f_i( iz_i) dz_i\right)\times\left(\prod_{j\in B_2\backslash\{g\}}\frac{2}{2\pi i} \int_{(\delta_j)} \frac{-1}{z_j-z_g}f_j(iz_j) dz_j \right)\nonumber \\
&&\qquad\qquad\qquad\qquad\times f_d(iz_d) f_g( iz_g)dz_g dz_d\nonumber \\
&&=\sum_{d\in B_1}\sum_{g \in B_2}\int_{-\infty}^{\infty}\cdots \int_{-\infty}^{\infty} \prod_{i\in \{1,\ldots,m\}} \hat{f_i}(u_i) \;\frac{4}{(2\pi i)^2}\int_{(\delta_d)}\int_{(\delta_g)} -e^{2\pi(z_d(u_d-1)+z_g(u_g-1))}\nonumber \\
&&\qquad\qquad\qquad\qquad\qquad\qquad\times\frac{(z_d-z_g) (z_g-z_d)} {(z_d+z_g) (z_d+z_g)(2z_d)(2z_g)}\nonumber\\
&&\qquad\times \left( \prod_{i\in A_1} \frac{2}{2\pi i}\int_{(\delta_i)}\frac{e^{2\pi z_iu_i}}{z_i+z_d}dz_i\right)\times\left(\prod_{j\in B_1\backslash\{d\}}\frac{2}{2\pi i} \int_{(\delta_j)} \frac{-e^{2\pi z_ju_j}}{z_j-z_d} dz_j \right)\nonumber \\
&&\qquad\times \left( \prod_{i\in A_2} \frac{2}{2\pi i}\int_{(\delta_i)}\frac{e^{2\pi z_iu_i}}{z_i+z_g} dz_i\right)\times\left(\prod_{j\in B_2\backslash\{g\}}\frac{2}{2\pi i} \int_{(\delta_j)} \frac{-e^{2\pi z_ju_j}}{z_j-z_g}dz_j \right)dz_g dz_d \;du_1\cdots du_m.
\end{eqnarray}

We now perform the contour integration for all variables except $z_d$ and $z_g$, paying careful attention to the ordering $0<\delta_1<\cdots<\delta_m$ as at (\ref{eq:lem4e}).  So (\ref{eq:lem5a}) equals
\begin{eqnarray}\label{eq:lem5b}
&&\frac{2^m}{(2\pi i)^2}\sum_{d\in B_1}\sum_{g \in B_2}\int_{-\infty}^{\infty}\int_{-\infty}^{\infty}\;\int_{(\mathbb{R}\geq 0)^{\{1,\ldots,m\}\backslash\{d,g\}} }\prod_{i\in \{1,\ldots,m\}} \hat{f_i}(u_i) \int_{(\delta_d)}\int_{(\delta_g)} -e^{2\pi(z_d(u_d-1)+z_g(u_g-1))}\nonumber \\
&&\qquad\qquad\qquad\qquad\qquad\qquad\times\frac{(z_d-z_g) (z_g-z_d)} {(z_d+z_g) (z_d+z_g)(2z_d)(2z_g)}\nonumber\\
&&\qquad\times \left( \prod_{i\in A_1}e^{-2\pi z_du_i}\right)\left(\prod_{j\in B^1_{<d}}e^{-2\pi z_du_j} \right)\left(\prod_{j\in B^1_{>d}}-e^{2\pi z_du_j} \right)\nonumber \\
&&\qquad\times \left( \prod_{i\in A_2}e^{-2\pi z_gu_i}\right)\times\left(\prod_{j\in B^2_{<g}}e^{-2\pi z_gu_j} \right)\left(\prod_{j\in B^2_{>g}}-e^{2\pi z_gu_j} \right)\nonumber \\
&&\qquad\times dz_g dz_d \left( \prod_{j\in \{1,\ldots,m\}\backslash\{d,g\}} du_j\right) du_d du_g,
\end{eqnarray}
where here $B_{<d}^1$ is the set $\{j\in B_1: j<d\}$, and similarly with the other notation. 

The variable $z_g$ has a pole at $z_g=0$ and another at $z_g=-z_d$.  Computing these residues, we find that
\begin{eqnarray}\label{eq:lem5res1}
&&\Res_{z_g=0} \left( -e^{2\pi z_g(u_g-1-\sum_{i\in A_2}u_i-\sum_{j\in B_{<g}^2}u_j+\sum_{j\in B_{>g}^2} u_j)}  \frac{(z_d-z_g) (z_g-z_d)} {(z_d+z_g) (z_d+z_g)(2z_d)(2z_g)}\right) \nonumber \\
&& = \frac{1}{4z_d} 
\end{eqnarray}
and, with some miraculous cancelation,
\begin{eqnarray}\label{eq:lem5res2}
&&\Res_{z_g=-z_d} \left( -e^{2\pi z_g(u_g-1-\sum_{i\in A_2}u_i-\sum_{j\in B_{<g}^2}u_j+\sum_{j\in B_{>g}^2} u_j)}  \frac{(z_d-z_g) (z_g-z_d)} {(z_d+z_g) (z_d+z_g)(2z_d)(2z_g)}\right) \nonumber \\
&&=-2\pi \left(u_g-1-\sum_{i\in A_2}u_i-\sum_{j\in B_{<g}^2}u_j+\sum_{j\in B_{>g}^2} u_j\right) e^{-2\pi z_d(u_g-1-\sum_{i\in A_2}u_i-\sum_{j\in B_{<g}^2}u_j+\sum_{j\in B_{>g}^2} u_j)} .
\end{eqnarray}

These poles both lie to the left of the $\delta_g$ line, so we only pick up a contribution when $0<u_g-1-\sum_{i\in A_2}u_i-\sum_{j\in B_{<g}^2}u_j+\sum_{j\in B_{>g}^2} u_j$, meaning we are able to close the contour to the left, making the $z_g$ integral equal to $2\pi i$ times the sum of the enclosed residues.  Thus we have that after performing the $z_g$ integral, (\ref{eq:lem5b}) equals 
\begin{eqnarray} \label{eq:lem5b1}
&&2^m\sum_{d\in B_1}\sum_{g \in B_2}\int_{-\infty}^{\infty}\int_{-\infty}^{\infty}\;\int_{\substack{(\mathbb{R}\geq 0)^{\{1,\ldots,m\}\backslash\{d,g\}} \\\sum_{i\in A_2}u_i+\sum_{j\in B_{<g}^2}u_j<u_g-1+\sum_{j\in B_{>g}^2} u_j}}\prod_{i\in \{1,\ldots,m\}}  \hat{f_i}(u_i) \nonumber\\
&&\times (-1)^{|B_{>d}^1|} (-1)^{|B_{>g}^2|} \frac{1}{2\pi i} \int_{\delta_d} e^{2\pi(z_d(u_d-1-\sum_{i\in A_1}u_i-\sum_{j\in B_{<d}^1}u_j+\sum_{j\in B_{>d}^1} u_j)} \nonumber \\
&&\qquad \times \left( \frac{1}{4z_d}- 2\pi \left(u_g-1-\sum_{i\in A_2}u_i-\sum_{j\in B_{<g}^2}u_j+\sum_{j\in B_{>g}^2} u_j\right)\right.\nonumber \\
&& \qquad \times\left. e^{-2\pi z_d(u_g-1-\sum_{i\in A_2}u_i-\sum_{j\in B_{<g}^2}u_j+\sum_{j\in B_{>g}^2} u_j)} \right) dz_d\left( \prod_{j\in \{1,\ldots,m\}\backslash\{d,g\}} du_j\right) du_d du_g.
\end{eqnarray}

We now address the $z_d$ integral and (\ref{eq:lem5b1}) equals:
\begin{eqnarray}\label{eq:lem5c}
&&2^m\sum_{d\in B_1}\sum_{g \in B_2}\int_{-\infty}^{\infty}\int_{-\infty}^{\infty}\;\int_{\substack{(\mathbb{R}\geq 0)^{\{1,\ldots,m\}\backslash\{d,g\}} \\\sum_{i\in A_1}u_i+\sum_{j\in B_{<g}^1}u_j<u_d-1+\sum_{j\in B_{>g}^1} u_j\\\sum_{i\in A_2}u_i+\sum_{j\in B_{<g}^2}u_j<u_g-1+\sum_{j\in B_{>g}^2} u_j}}\prod_{i\in \{1,\ldots,m\}} \hat{f_i}(u_i)\nonumber \\
&&\qquad\qquad \times\frac{1}{4}(-1)^{|B_{>d}^1|} (-1)^{|B_{>g}^2|}  \left( \prod_{j\in \{1,\ldots,m\}\backslash\{d,g\}} du_j\right) du_d du_g\nonumber \\
&&-2^m\sum_{d\in B_1}\sum_{g \in B_2}\int_{-\infty}^{\infty}\int_{-\infty}^{\infty}\;\int_{\substack{(\mathbb{R}\geq 0)^{\{1,\ldots,m\}\backslash\{d,g\}} \\\sum_{i\in A_2}u_i+\sum_{j\in B_{<g}^2}u_j<u_g-1+\sum_{j\in B_{>g}^2} u_j}}\prod_{i\in \{1,\ldots,m\}} \hat{f_i}(u_i)\nonumber \\
&&\qquad\qquad \times\left(\int_{-\infty}^\infty \left(u_g-1-\sum_{i\in A_2}u_i-\sum_{j\in B_{<g}^2}u_j+\sum_{j\in B_{>g}^2} u_j\right)\right. \nonumber \\
&&\qquad\qquad \times\left. e^{2\pi iz_d(u_d-u_g-\sum_{i\in A_1\cup B_{<d}^1\cup B_{>g}^2}u_i+\sum_{j\in B_{>d}^1\cup A_2\cup B_{<g}^2} u_j)}dz_d \right)\nonumber \\
&&\qquad\qquad\times(-1)^{|B_{>d}^1|} (-1)^{|B_{>g}^2|}  \left( \prod_{j\in \{1,\ldots,m\}\backslash\{d,g\}} du_j\right) du_d du_g.
\end{eqnarray}
This simplifies, using the definition of the delta function at (\ref{eq:delta}), to
\begin{eqnarray}
&&2^m\sum_{d\in B_1}\sum_{g \in B_2}\int_{-\infty}^{\infty}\int_{-\infty}^{\infty}\;\int_{\substack{(\mathbb{R}\geq 0)^{\{1,\ldots,m\}\backslash\{d,g\}} \\\sum_{i\in A_1}u_i+\sum_{j\in B_{<g}^1}u_j<u_d-1+\sum_{j\in B_{>g}^1} u_j\\\sum_{i\in A_2}u_i+\sum_{j\in B_{<g}^2}u_j<u_g-1+\sum_{j\in B_{>g}^2} u_j}}\prod_{i\in \{1,\ldots,m\}} \hat{f_i}(u_i)\nonumber \\
&&\qquad\qquad \times\left(\frac{1}{4}-\left[\left(u_g-1-\sum_{i\in A_2}u_i-\sum_{j\in B_{<g}^2}u_j+\sum_{j\in B_{>g}^2} u_j\right)\right.\right.\nonumber \\
&&\qquad\qquad \times\left. \delta(u_d-u_g-\sum_{i\in A_1\cup B_{<d}^1\cup B_{>g}^2}u_i+\sum_{j\in B_{>d}^1\cup A_2\cup B_{<g}^2} u_j)\right]\nonumber \\
&&\qquad\qquad\times(-1)^{|B_{>d}^1|} (-1)^{|B_{>g}^2|}  \left( \prod_{j\in \{1,\ldots,m\}\backslash\{d,g\}} du_j\right) du_d du_g.
\end{eqnarray}
We note that above we can impose the condition $\sum_{i\in A_1}u_i+\sum_{j\in B_{<g}^1}u_j<u_d-1+\sum_{j\in B_{>g}^1} u_j$ on both integrals from (\ref{eq:lem5c}) because the delta function forces it to hold if the condition $\sum_{i\in A_2}u_i+\sum_{j\in B_{<g}^2}u_j<u_g-1+\sum_{j\in B_{>g}^2} u_j$ holds. 

Similarly to the proof of Lemma \ref{lem:4}, changing the sign of the variable on the negative half of the line of integration of  $u_d$ and $u_g$ results in a change of sign of that variable in the inequalities governing the region of integration and in the factor in square brackets above, but not in the $\hat{f}$s as the Fourier transforms of the test functions are each even functions.  We will denote the quantity in the square brackets above as $SQ(u_d,u_g)$ as a space saving device.  $SQ$ depends on the other $u$ variables as well, but $u_d$ and $u_g$ are the only ones that will change sign in the next equation so we list only those two explicitly.  Thus we end up with four terms
\begin{eqnarray}\label{eq:lem5d}
&&2^m\sum_{d\in B_1}\sum_{g \in B_2}\int_{0}^{\infty}\int_{0}^{\infty}\;\int_{\substack{(\mathbb{R}\geq 0)^{\{1,\ldots,m\}\backslash\{d,g\}} \\\sum_{i\in A_1}u_i+\sum_{j\in B_{<g}^1}u_j<u_d-1+\sum_{j\in B_{>g}^1} u_j\\\sum_{i\in A_2}u_i+\sum_{j\in B_{<g}^2}u_j<u_g-1+\sum_{j\in B_{>g}^2} u_j}}\prod_{i\in \{1,\ldots,m\}} \hat{f_i}(u_i)\nonumber \\
&&\qquad\qquad \times\left(\frac{1}{4}-SQ(u_d,u_g)\right)(-1)^{|B_{>d}^1|} (-1)^{|B_{>g}^2|}  \left( \prod_{j\in \{1,\ldots,m\}\backslash\{d,g\}} du_j\right) du_d du_g
\nonumber \\
&&+2^m\sum_{d\in B_1}\sum_{g \in B_2}\int_{0}^{\infty}\int_{0}^{\infty}\;\int_{\substack{(\mathbb{R}\geq 0)^{\{1,\ldots,m\}\backslash\{d,g\}} \\u_d+\sum_{i\in A_1}u_i+\sum_{j\in B_{<g}^1}u_j<-1+\sum_{j\in B_{>g}^1} u_j\\\sum_{i\in A_2}u_i+\sum_{j\in B_{<g}^2}u_j<u_g-1+\sum_{j\in B_{>g}^2} u_j}}\prod_{i\in \{1,\ldots,m\}} \hat{f_i}(u_i)\nonumber \\
&&\qquad\qquad \times\left(\frac{1}{4}-SQ(-u_d,u_g)\right)(-1)^{|B_{>d}^1|} (-1)^{|B_{>g}^2|}  \left( \prod_{j\in \{1,\ldots,m\}\backslash\{d,g\}} du_j\right) du_d du_g
\nonumber \\
&&+2^m\sum_{d\in B_1}\sum_{g \in B_2}\int_{0}^{\infty}\int_{0}^{\infty}\;\int_{\substack{(\mathbb{R}\geq 0)^{\{1,\ldots,m\}\backslash\{d,g\}} \\\sum_{i\in A_1}u_i+\sum_{j\in B_{<g}^1}u_j<u_d-1+\sum_{j\in B_{>g}^1} u_j\\u_g+\sum_{i\in A_2}u_i+\sum_{j\in B_{<g}^2}u_j<-1+\sum_{j\in B_{>g}^2} u_j}}\prod_{i\in \{1,\ldots,m\}} \hat{f_i}(u_i)\nonumber \\
&&\qquad\qquad \times\left(\frac{1}{4}-SQ(u_d,-u_g)\right)(-1)^{|B_{>d}^1|} (-1)^{|B_{>g}^2|}  \left( \prod_{j\in \{1,\ldots,m\}\backslash\{d,g\}} du_j\right) du_d du_g
\nonumber \\
&&+2^m\sum_{d\in B_1}\sum_{g \in B_2}\int_{0}^{\infty}\int_{0}^{\infty}\;\int_{\substack{(\mathbb{R}\geq 0)^{\{1,\ldots,m\}\backslash\{d,g\}} \\u_d+\sum_{i\in A_1}u_i+\sum_{j\in B_{<g}^1}u_j<-1+\sum_{j\in B_{>g}^1} u_j\\u_g+\sum_{i\in A_2}u_i+\sum_{j\in B_{<g}^2}u_j<-1+\sum_{j\in B_{>g}^2} u_j}}\prod_{i\in \{1,\ldots,m\}} \hat{f_i}(u_i)\nonumber \\
&&\qquad\qquad \times\left(\frac{1}{4}-SQ(-u_d,-u_g)\right)(-1)^{|B_{>d}^1|} (-1)^{|B_{>g}^2|}  \left( \prod_{j\in \{1,\ldots,m\}\backslash\{d,g\}} du_j\right) du_d du_g.
\end{eqnarray}
The right hand side of Lemma \ref{lem:5} is the first term in (\ref{eq:lem5d}) in the case that $d$ is the smallest element in $B_1$ and $g$ is the smallest element in $B_2$. 

To see that all the other terms cancel out we start by labelling the elements of $B_1$ as $j_1<j_2<\cdots<j_{|B_1|}$ and the elements of $B_2$ as $k_1<k_2<\cdots<k_{|B_2|}$. Then we pick a single term in the double sum with conditions $\sum _{i\in A_1} u_i\; +u_{j_1}+\cdots +u_{j_p} < u_{j_{p+1}}+\cdots+u_{|B_1|}-1$ and $\sum _{i\in A_1} u_i \;+u_{k_1}+\cdots +u_{k_q} < u_{k_{q+1}}+\cdots+u_{|B_2|}-1$.  That term can arise in four different ways.  It could come from the term in the double sum where $d=j_{p+1}$ and $g=k_{q+1}$.  We will refer to this term as carrying an overall plus sign.  Also, the same term will appear when $d=j_p$ and $g=k_{q+1}$ (this would be a term like that in the second line of (\ref{eq:lem5c})). It would then have an overall minus sign because there would be more element in $B_{>d}^1$. The same expression would come up when $d=j_{p+1}$ and $g=k_{q}$ (as in the third line above): again this would occur with an extra minus sign.  The four occurance would be in the term $d=j_p$ and $g=k_q$.  This would have a plus sign.  Thus we can see that these four terms would cancel.  The only situation under which these four terms would not exist is if $p+1=1$ or if $q+1=1$.  If $p+1=1$ but $q+1\neq 1$ then there exists no term with $d=p$ and only two of the four terms exist, but those two still cancel as their signs are opposite.  Similarly if $p+1\neq1$ and $q+1= 1$.  If both $p+1=1$ and $q+1=1$ then this is the term that gives us the right hand side of Lemma \ref{lem:5}.  (It is also helpful to note that there is no term where $j_p$ or $k_q$ are the largest elements in the sets $B_1$ or $B_2$, respectively, because in this case the inequality can never be satisfied.)

Thus we are left with just the first term in (\ref{eq:lem5c}) in the case that $d$ is the smallest element in $B_1$ and $g$ is the smallest element in $B_2$:
\begin{eqnarray}
&&2^{m}(-1)^{|B_1\cup B_2|} \int_{\substack{(\mathbb{R}\geq 0)^{|A_1\cup B_1\cup A_2 \cup B_2|} \\ \sum_{i\in A_1}u_i\leq (\sum_{i\in B_1} u_i)-1\\ \sum_{i\in A_2}u_i\leq (\sum_{i\in B_2} u_i)-1}} \left( \frac{1}{4}-\left[\left(-\sum_{i\in A_2}u_i+\sum_{j\in B_2} u_j-1\right)\right.\right.\nonumber \\
&&\qquad\qquad \times\left.\left. \delta\left(-\sum_{i\in A_1\cup B_2}u_i+\sum_{j\in B_1\cup A_2} u_j\right)\right]\right)\prod_{i\in \{1,\ldots,m\}} \hat{f_i}(u_i) \prod_{i\in \{1,\ldots,m\}} du_i.
\end{eqnarray}
\end{proof}

\section{ Support $\sum |u_j| <1$} \label{sect:support1}

In this Section we demonstrate that with the Lemmas \ref{lem:1} to \ref{lem:3} it is straightforward to show that Rubinstein's $n$-level density \cite{kn:rub01} of the zeros of quadratic Dirichlet $L$-functions agrees in the limit with the random matrix result for the $n$-level density of eigenvalues from large matrices selected with Haar measure from $USp(2N)$.  Rubinstein's result states the $n$-level density for test functions with Fourier transform, $\prod_{i=1}^n \hat{f_i}(u_i)$, supported on $\sum_{i=1}^n |u_i|<1$.  He proves that the limit matches random matrix theory, but here we show that the job is made easier by comparing with (\ref{eq:Uworking2}) instead of the determinantal form of the $n$-level density. 

That is, we give a straightforward proof of the following:

\begin{theorem}\label{theo:support1}
For test functions $f_1, \ldots, f_n$ with properties as at (\ref{eq:testfunction}) and with the product of their Fourier transforms $\prod_{i=1}^n \hat{f_i}(u_i)$ having support in $\sum_{i=1}^n |u_i|<1$, 
\begin{eqnarray}
&&\lim_{X\rightarrow \infty} \frac{1}{|D(X)|}  \sum_{d\in D(X)} \sum_{j_1,\ldots,j_n=-\infty}^\infty f_1\Big(L\gamma_d^{(j_1)} \Big)\cdots f_n\Big(L\gamma_d^{(j_n)}\Big)\nonumber \\
&& = \lim_{N\rightarrow \infty}  \int_{USp(2N)} \sum _{\substack{j_1,\ldots,j_n=-\infty \\j_1,\ldots,j_n\neq 0}}^\infty f_1\Big(\frac{N}{\pi} \theta_{j_1}\Big) \cdots f_n\Big(\frac{N}{\pi} \theta_{j_n}\Big) d\mu_{{\rm Haar}},
\end{eqnarray}
where the eigenangles $\theta_{j_i}$ are counted as described at Theorem \ref{theo:massna}.
\end{theorem}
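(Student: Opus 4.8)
The plan is to evaluate the right-hand side directly from the alternative formula (\ref{eq:Uworking2}) and to verify that the explicit expression obtained coincides with the limiting $n$-level density of the zeros as computed by Rubinstein \cite{kn:rub01} from the explicit formula; both sides collapse, under the support hypothesis $\sum_i|u_i|<1$, to the same elementary combinatorial expression in the $\hat f_i$.

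First I would specialise (\ref{eq:Uworking2}) to the scaled test function $F(\theta_1,\ldots,\theta_n)=\prod_{i=1}^n f_i(\tfrac{N}{\pi}\theta_i)$. Since the support of $\prod_i\hat f_i$ lies in $\sum_i|u_i|<1$, the contour-pushing argument recorded just after (\ref{def:UJq}) applies with $q=1$, so in the limit only the $D=\emptyset$ term of $J^*_{USp(2N)}$ survives and $J^*_{USp(2N),1}(z_Q)$ reduces to $\sum_{Q=W_1\cup\cdots\cup W_R,\ |W_r|\le 2}\prod_r H_\emptyset(W_r)$, where the square-root prefactor is $1$ and, the sums $\sum_{d\in D}$ being empty, $H_\emptyset(\{\alpha\})=\tfrac{z'}{z}(2\alpha)$, $H_\emptyset(\{\alpha,\beta\})=(\tfrac{z'}{z})'(\alpha+\beta)$, $H_\emptyset(\emptyset)=1$. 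For each fixed decomposition $Q\cup M=\{1,\ldots,n\}$ and each fixed block partition of $Q$, the integrand factorises: the constants $(2N)^{|M|}$ and $2^{|Q|}$ together with the prefactor $(2\pi i)^{-n}$ distribute so that each index $a\in M$ carries $\tfrac{2N}{2\pi i}\int_{(0)}f_a(\tfrac{N}{\pi}iz)\,dz$, each singleton block $\{a\}$ of $Q$ carries $\tfrac{2}{2\pi i}\int_{(\delta)}\tfrac{z'}{z}(2z)f_a(\tfrac{N}{\pi}iz)\,dz$, and each pair block $\{a,b\}$ of $Q$ carries $\tfrac{4}{(2\pi i)^2}\int_{(\delta)}\int_{(\delta)}(\tfrac{z'}{z})'(z_a+z_b)f_a(\tfrac{N}{\pi}iz_a)f_b(\tfrac{N}{\pi}iz_b)\,dz_a\,dz_b$. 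Passing to the limit term by term --- justified, together with the minor contour shifts and the dropping of horizontal segments, by the decay hypotheses (\ref{eq:testfunction}) --- I apply Lemma \ref{lem:3} to the $M$-factors, Lemma \ref{lem:2} to the singleton-$Q$ factors and Lemma \ref{lem:1} to the pair factors. The right-hand side thus equals a sum over the ways of choosing a (possibly empty) set of disjoint pairs in $\{1,\ldots,n\}$ and then assigning each remaining index to $Q$ or to $M$, weighted by $2\int_{-\infty}^\infty\hat f_a(u)\hat f_b(u)|u|\,du$ per pair, $-\tfrac12\int_{-\infty}^\infty\hat f_a(u)\,du$ per singleton in $Q$, and $\int_{-\infty}^\infty f_a(y)\,dy$ per index in $M$.

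Finally I would compare with the number-theoretic side. Applying the explicit formula to $|D(X)|^{-1}\sum_{d\in D(X)}\sum_{j_1,\ldots,j_n}\prod_i f_i(L\gamma_d^{(j_i)})$ and letting $X\to\infty$, as Rubinstein does, produces the same combinatorial structure: since the sum over zeros is unrestricted, coincidences among the indices $j_1,\ldots,j_n$ generate the pair contributions, each remaining index generates either a diagonal prime-sum contribution or an $\int f_a$ contribution, and the restriction $\sum_i|u_i|<1$ forces every genuinely arithmetic (off-diagonal, higher prime-power) term to vanish in the limit, leaving the identical weights. The two expressions then agree by inspection. I expect the only real work to be the combinatorial bookkeeping --- checking that the double decomposition $Q\cup M$ together with $Q=W_1\cup\cdots\cup W_R$ reproduces exactly the same signed, $2$-power-weighted family of configurations that arises from the coincidence pattern of the zeros --- and this matching is precisely the simplification that the alternative formula (\ref{eq:Uworking2}) is designed to make transparent; there is no analytic obstacle beyond what is already contained in Lemmas \ref{lem:1}--\ref{lem:3}.
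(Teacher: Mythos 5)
Your proposal follows the paper's proof essentially verbatim: restrict $J^*_{USp(2N)}$ to its $D=\emptyset$ term using the support hypothesis, factor the resulting integral over the singleton and pair blocks of $Q$ and over the set $M$, apply Lemmas \ref{lem:1}--\ref{lem:3} to each factor, and identify the outcome with Rubinstein's expression (\ref{eq:rubinstein}), which the paper, like you, quotes rather than rederives. The one inaccuracy is your gloss on the arithmetic side --- in the unrestricted-sum form the explicit formula is applied independently in each variable, and the pair terms $\int|u|\hat{f_a}\hat{f_b}\,du$ arise from the diagonal of the averaged quadratic character sums over primes, not from coincidences among the zero indices $j_1,\ldots,j_n$ --- but since both you and the paper defer to Rubinstein for that computation, this does not affect the argument.
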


In \cite{kn:rub01}, Rubinstein's first step is to prepare a sum over zeros that are distinct.  We do not need to make this combinatorial argument because, unlike the determinantal form of the $n$-level density,  the result in (\ref{eq:Uworking2})  sums over all $n$-tuples of zeros, not just those containing distinct zeros.  Thus we rewrite Rubinstein's result to make this simplification. 

Rubinstein chooses a test function of the form
\begin{equation}
f(x_1,x_2, \ldots, x_n)=\prod_{i=1}^n f_i(x_i),
\end{equation}
where each $f_i$ is even and in $S(\mathbb{R})$ (i.e., smooth and rapidly decreasing).  Removing the restriction of summing over distinct zeros, the quantity Rubinstein calculates is the $n$-level density
\begin{theorem}[Rubinstein \cite{kn:rub01}]\label{theo:rubinstein} With the product of the Fourier transforms of the test functions, $\prod_{i=1}^n \hat{f_i}(u_i)$, having support in $\sum_{i=1}^n |u_i|<1$, 
\begin{eqnarray}\label{eq:rubinstein}
&&\lim_{X\rightarrow \infty} \frac{1}{|D(X)|}  \sum_{d\in D(X)} \sum_{j_1,\ldots,j_n=-\infty}^\infty f_1\Big(L\gamma_d^{(j_1)} \Big)\cdots f_n\Big(L\gamma_d^{(j_n)}\Big)\nonumber \\
&&=\sum_{Q\cup M=\{1,\ldots,n\}} \Bigg(\prod_{m\in M}\int_{-\infty}^\infty f_m(x)dx\Bigg) \nonumber \\
&&\qquad\qquad \times\Bigg( \sum_{\substack{S_2\subseteq Q\\|S_2|\;{\rm even}}} \bigg(\Big(-\frac{1}{2}\Big)^{|S_2^c|} \prod_{\ell \in  S_2^c} \int_{-\infty}^\infty \hat{f_\ell}(u)du\bigg) \bigg( \sum_{(A;B)} 2^{|S_2|/2} \prod_{j=1}^{|S_2|/2} \int_{-\infty}^\infty |u| \hat{f_{a_j}}(u) \hat{f_{b_j}}(u) du \bigg) \Bigg).
\end{eqnarray}
where notation is given at the start of Section \ref{sect:results} and some explanation is given below.
\end{theorem}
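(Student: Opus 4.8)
The plan is to obtain Theorem~\ref{theo:rubinstein} from Rubinstein's theorem in \cite{kn:rub01}, which is proved there for the sum over $n$-tuples of zeros whose indices $j_1,\dots,j_n$ are \emph{distinct}, by a purely combinatorial reorganization. The analytic ingredients — the explicit formula applied to each $\sum_j f_i(L\gamma_d^{(j)})$, the averaging over $d\in D(X)$, and the fact that $\sum_{d\in D(X)}\chi_d(n)$ is negligible unless $n$ is a perfect square, so that under $\sum_{i=1}^n|u_i|<1$ the only surviving prime contributions are the diagonal square terms — are exactly those established by Rubinstein, and I would quote them wholesale rather than redo them.

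First I would recall the shape of the distinct-index result. Under the support restriction $\sum_{i=1}^n|u_i|<1$ the limiting density is built entirely from one- and two-zero pieces: a single zero contributes either $\int_{-\infty}^\infty f_i(x)\,dx$ (the $\delta$-mass in the symplectic one-level density, whose Fourier transform is $\delta(u)-\tfrac12\mathbf{1}_{|u|<1}$) or $-\tfrac12\int_{-\infty}^\infty\hat f_i(u)\,du$ (the $-\tfrac12\mathbf{1}_{|u|<1}$ part), while a pair of zeros contributes $2\int_{-\infty}^\infty |u|\,\hat f_{a}(u)\hat f_{b}(u)\,du$ from the two-level cross term, with no three-or-more-fold interaction possible; this collapse is precisely what makes the support-$<1$ case the tractable one. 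This is Rubinstein's theorem, which I take as given.

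Next I would pass from distinct to unrestricted $n$-tuples. Write the left-hand side of (\ref{eq:rubinstein}) as a sum over set partitions of $\{1,\dots,n\}$ — one summation index per block, equal within a block and distinct across blocks — so each block $B$ carries the merged test function $F_B:=\prod_{i\in B}f_i$, which still satisfies (\ref{eq:testfunction}); applying the distinct-index formula with these merged test functions, expanding each $\hat F_B$ as the convolution of the corresponding $\hat f_i$, and re-summing over all partitions. The support hypothesis again forbids any interaction spanning three or more of the original indices, so the only surviving configurations are: indices $m$ taken from the $\delta$-part, forming the set $M$ and producing $\prod_{m\in M}\int_{-\infty}^\infty f_m$; indices $\ell$ taken from the $-\tfrac12$-part, forming $S_2^c\subseteq Q$ and producing $(-\tfrac12)^{|S_2^c|}\prod_\ell\int_{-\infty}^\infty\hat f_\ell$; and indices paired two-by-two, forming $S_2$ with pairing $(A;B)$ and producing $2^{|S_2|/2}\prod_j\int_{-\infty}^\infty|u|\,\hat f_{a_j}\hat f_{b_j}$. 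Matching the multiplicities and signs generated by the partition sieve against those in the $(Q,M,S_2,(A;B))$ bookkeeping then yields (\ref{eq:rubinstein}) exactly, the requirement that $|S_2|$ be even recording that the paired indices must pair up completely.

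The main obstacle is combinatorial rather than analytic: one must keep the partition sieve, the even/odd constraint on $|S_2|$, the choice of pairing $(A;B)$, and the factors $-\tfrac12$ and $2$ mutually consistent, and, crucially, verify that the merged-test-function contributions that superficially look like higher interactions genuinely collapse to products of one- and two-index terms once $\sum_{i=1}^n|u_i|<1$ is imposed. I would pin down conventions by treating $n=1$ and $n=2$ by hand and then induct on $n$, removing one index at a time according to whether it joins $M$, joins $S_2^c$, or is paired inside $S_2$; the computation in Section~\ref{sect:support1}, where the companion identity relating the \emph{right}-hand side of (\ref{eq:rubinstein}) to the random matrix formula (\ref{eq:Uworking2}) via Lemmas~\ref{lem:1}--\ref{lem:3} is carried out, provides a convenient template for organizing the signs.
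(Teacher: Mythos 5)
Your route is genuinely different from the paper's, and it runs in the opposite direction to the logic the section is built on. The paper's derivation of Theorem \ref{theo:rubinstein} never touches the distinct-index statement: because the left-hand side of (\ref{eq:rubinstein}) is an \emph{unrestricted} sum over $n$-tuples, it factors exactly as $\prod_{j=1}^n\big(\sum_{j}f_j(L\gamma_d^{(j)})\big)$, each factor is replaced by the one-variable explicit formula, the product is expanded to produce the $Q\cup M$ decomposition, and the average over $d$ of the remaining product of prime sums is evaluated by Rubinstein's Lemma 1, which is what creates the $S_2$, $S_2^c$, $(A;B)$ structure. No partition sieve appears anywhere. You instead take Rubinstein's distinct-index theorem as a black box and propose to invert his sieve: write the unrestricted sum as a sum over set partitions with merged test functions $F_B=\prod_{i\in B}f_i$ and resum. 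Since the partition identity is exact and both formulations are theorems, this must in principle succeed, but it reintroduces precisely the combinatorics that restating the theorem without the distinctness condition is designed to eliminate, and it also forces you to start from the determinantal (or sieved) form of the limit, which is the object the paper argues is the wrong one to work with.

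There is also a concrete soft spot in your justification of the collapse. You assert that ``the support hypothesis again forbids any interaction spanning three or more of the original indices,'' but that is not the mechanism. Individual partition terms genuinely contain multi-index quantities: for $n=3$ and the partition $\{1\},\{2,3\}$, the pairing contribution is $2\int|u|\,\hat{f_1}(u)\,\widehat{f_2f_3}(u)\,du$ with $\widehat{f_2f_3}=\hat{f_2}*\hat{f_3}$, and the block terms $\int\prod_{i\in B}f_i(x)\,dx$ for $|B|\geq2$ are likewise nonzero in general; none of these is killed by $\sum_i|u_i|<1$. They disappear only through algebraic cancellation \emph{across} partitions, which is exactly Rubinstein's sieve run backwards, and that verification --- which you defer to an unstated induction --- is the entire content of the proof on your route. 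So the proposal is best read as a workable but circuitous outline whose key step is asserted rather than carried out; the direct argument via the explicit formula and Rubinstein's Lemma 1 avoids the issue altogether.
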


Rubinstein uses the explicit formula in the form
\begin{equation}
\sum_{j=-\infty}^\infty f(L\gamma_d^{j}) = \int_{-\infty}^\infty f(x)dx +O\Big(\frac{1}{\log X}
\Big) -\frac{2}{\log X} \sum _{m=1}^\infty \frac{\Lambda(m)}{m^{1/2}} \chi_d(m) \hat{f}\Big(\frac {\log m} {\log X} \Big)
\end{equation} 
to perform the following manipulations
\begin{eqnarray}
&&\lim_{X\rightarrow \infty} \frac{1}{|D(X)|}  \sum_{d\in D(X)} \sum_{j_1,\ldots,j_n=-\infty}^\infty f_1\Big(L\gamma_d^{(j_1)} \Big)\cdots f_n\Big(L\gamma_d^{(j_n)}\Big)\nonumber \\
&&= \lim_{X\rightarrow \infty} \frac{1}{|D(X)|} \prod_{j=1}^n \Bigg(\int_{-\infty}^\infty f_j(x)dx +O\Big(\frac{1}{\log X}
\Big) -\frac{2}{\log X} \sum _{m=1}^\infty \frac{\Lambda(m)}{m^{1/2}} \chi_d(m) \hat{f_j}\Big(\frac {\log m} {\log X} \Big)
\Bigg)\nonumber \\
&& = \sum_{Q\cup M=\{1,\ldots,n\}} \Bigg(\prod_{m\in M}\int_{-\infty}^\infty f_m(x)dx\Bigg) \nonumber \\
&&\qquad\qquad \times \Bigg( \lim_{X\rightarrow \infty} \frac{1}{|D(X)|} \sum_{d\in D(X)} \prod_{k\in Q} \bigg( -\frac{2}{\log X} \sum _{m=1}^\infty \frac{\Lambda(m)}{m^{1/2}} \chi_d(m) \hat{f_k}\Big(\frac {\log m} {\log X} \bigg)\Bigg)\nonumber \\
&&=\sum_{Q\cup M=\{1,\ldots,n\}} \Bigg(\prod_{m\in M}\int_{-\infty}^\infty f_m(x)dx\Bigg) \nonumber \\
&&\qquad\qquad \times\Bigg( \sum_{\substack{S_2\subseteq Q\\|S_2|\;{\rm even}}} \bigg(\Big(-\frac{1}{2}\Big)^{|S_2^c|} \prod_{\ell \in  S_2^c} \int_{-\infty}^\infty \hat{f_\ell}(u)du\bigg) \bigg( \sum_{(A;B)} 2^{|S_2|/2} \prod_{j=1}^{|S_2|/2} \int_{-\infty}^\infty |u| \hat{f_{a_j}}(u) \hat{f_{b_j}}(u) du \bigg) \Bigg).
\end{eqnarray}
The last line follows from Rubinstein's Lemma 1 \cite{kn:rub01} and we will not go into details here.  The sum over $Q$ and $M$ is over disjoint subsets of $\{1,\ldots,n\}$.  The sum over $S_2$ is over all subsets of $Q$ whose size is even. $S_2^c$ is the complement of $S_2$ in $Q$.  $\sum_{(A;B)}$ is a sum over all the ways of a pairing up the elements of $S_2$.  To use Rubinstein's example, if $Q=\{1,2,5,7\}$, the possible $S_2$'s are $\emptyset, \{1,2\}, \{1,5\},\{1,7\}, \{2,5\}, $ $\{2,7\}, \{5,7\}, \{1,2,5,7\}$.  If $S_2=\{1,2,5,7\}$ then the possible $(A;B)$'s are $(1,2;5,7)$, $(1,2;7,5)$, and $(1,5;2,7)$.  These correspond, respectively, to matching 1 with 5, 2 with 7, 1 with 7 and 2 with 5, 1 with 2 and 5 with 7.  Note that this notation is not unique, but is sufficient for our purposes. 

\begin{proof}[Proof of Theorem \ref{theo:support1}]
We start  with the expression for the $n$-level density of eigenvalues of matrices from $USp(2N)$ given at (\ref{eq:Uworking2})
\begin{eqnarray}\label{eq:Js}
&& \lim_{N\rightarrow \infty}  \int_{USp(2N)} \sum _{\substack{j_1,\ldots,j_n=-\infty \\j_1,\ldots,j_n\neq 0}}^\infty f_1\Big(\frac{N}{\pi} \theta_{j_1}\Big) \cdots f_n\Big(\frac{N}{\pi} \theta_{j_n}\Big) d\mu_{{\rm Haar}}  \nonumber\\
&&= \lim_{N\rightarrow \infty} \frac{1}{(2\pi i)^n} \\
&&\qquad\times \sum_{Q\cup M = \{1,\ldots,n\}} (2N)^{|M|}\int_{(\delta)^{|Q|}} \int_{(0)^{|M|}} 2^{|Q|} J_{USp(2N)}^*(z_Q) f_1\Big(\frac{N}{\pi}iz_1\Big) \cdots f_n\Big(\frac{N}{\pi}iz_n\Big) dz_1\cdots dz_n.\nonumber
\end{eqnarray}
Since the support of the Fourier transforms is in $\sum_{i=1}^n |u_i|<1$, we can replace $J_{USp(2N)}^*(z_Q)$ with $J_{USp(2N),1}^*(z_Q)$ as described at (\ref{def:UJq}).  We also separate out of the integral the variables in $M$ and use Lemma \ref{lem:3} to tidy them up.  This leads us to
\begin{eqnarray}\label{eq:QM}
&&\sum_{Q\cup M = \{1,\ldots,n\}} \Bigg( \prod_{m\in M} \int_{-\infty}^\infty f_m(x)dx\Bigg)\nonumber \\ &&\qquad\qquad \times \Bigg(\lim_{N\rightarrow \infty} \frac{2^{|Q|}}{(2\pi i)^{|Q|}} \int_{(\delta)^{|Q|}} J_{USp(2N),1}^* (z_Q) \prod_{k\in Q} f_k\Big( \frac{N}{\pi}iz_k\Big) dz_Q \Bigg) \\
&&=\sum_{Q\cup M = \{1,\ldots,n\}} \Bigg( \prod_{m\in M} \int_{-\infty}^\infty f_m(x)dx\Bigg) \nonumber \\
&&\qquad\qquad \times\Bigg(\lim_{N\rightarrow \infty} \frac{1}{(2\pi i)^{|Q|}} \int_{(\delta)^Q} \sum_{\substack{Q=W_1\cup\cdots\cup W_r\\ |W_r|\leq 2}} 2^{|Q|} \prod_{r=1}^R H_{\emptyset}(W_r)\; \prod_{k\in Q} f_k\Big(\frac{N}{\pi} iz_k\Big) dz_Q\Bigg) ,
\end{eqnarray}
using the definition of $J_{USp(2N)}^*$ from Theorem \ref{theo:massna}. Note that the sum over the $W_r$ splits $Q$ up into pairs and singletons of $z$s, just as in Rubinstein's result in Theorem \ref{theo:rubinstein}.

Now we rewrite this using Rubinstein's summing notation:
\begin{eqnarray}
&&\sum_{Q\cup M = \{1,\ldots,n\}} \Bigg( \prod_{m\in M} \int_{-\infty}^\infty f_m(x)dx\Bigg)  \Bigg(\sum_{\substack{S_2\subseteq Q\\ |S_2| \;{\rm even}}} \bigg(\prod_{\ell \in  S_2^c} \lim_{N\rightarrow \infty} \frac{2}{2\pi i} \int_{(\delta)} H_{\emptyset}(z_\ell) f_\ell\Big(\frac{N}{\pi} iz_\ell) dz_\ell\bigg) \nonumber \\
&& \qquad \qquad \times \bigg(\sum_{(A;B)} \prod_{j=1}^{|S_2|/2} \lim_{N\rightarrow \infty} \Big( \frac{4} {(2\pi i)^2} \int_{(\delta)} \int_{(\delta)} H_{\emptyset}(z_{a_j},z_{b_j}) f_{a_j}\Big(\frac{N}{\pi} iz_{a_j}\Big) f_{b_j}\Big(\frac{N}{\pi}iz_{b_j}\Big) dz_{a_j}dz_{b_j} \bigg)  \Bigg).
\end{eqnarray}

Note that
\begin{eqnarray}
H_\emptyset (z_\ell)&=& \frac{z'}{z} (2z_\ell)\\
H_{\emptyset}(z_1,z_2) &=& \Big(\frac{z'}{z}\Big)'(z_1+z_2),
\end{eqnarray}
and so we have, using Lemmas \ref{lem:1} and \ref{lem:2},
\begin{eqnarray}
&&\sum_{Q\cup M = \{1,\ldots,n\}} \Bigg( \prod_{m\in M} \int_{-\infty}^\infty f_m(x)dx\Bigg)  \Bigg(\sum_{\substack{S_2\subseteq Q\\ |S_2| \;{\rm even}}} \bigg(\prod_{\ell \in  S_2^c} -\frac{1}{2} \int_{-\infty}^\infty \hat{f_\ell}(u)du \bigg)\nonumber \\
&& \qquad \qquad\times  \bigg(\sum_{(A;B)} \prod_{j=1}^{|S_2|/2} 2\int_{-\infty}^\infty |u|  \hat{f_{a_j}}(u) \hat{f_{b_j}}(u)du \bigg)  \Bigg).
\end{eqnarray}
This is exactly the form of Rubinstein's Theorem \ref{theo:rubinstein}. 

\end{proof}
\section{Support $\sum |u_j|<2$}\label{sect:support2}

In this section we show that we can use (\ref{eq:Uworking2}) to match the $n$-level density of zeros from quadratic Dirichlet $L$-functions derived by Gao \cite{kn:gao14} to the $n$-level density of eigenvalues of matrices from $USp(2N)$.

We start by writing out Gao's result (Theorem II.1 of \cite{kn:gao05}, with minor changes of notation) without the restriction that the zeros must be distinct.  This can also be compared with Theorem 7.2 in the paper \cite{kn:entrodrud}, which is identical to Gao's expression. Both \cite{kn:gao14} and \cite{kn:entrodrud} use the family of Dirichlet $L$-functions $L(s,\chi_{8d})$ (with odd, positive, square-free $d$) as it simplifies the workings somewhat, but this does not make a material difference to the end result and does not affect the comparison with random matrix theory as the family is still expected to have zeros showing the behaviour of eigenvalues from $USp(2N)$. 

\begin{theorem}(Gao \cite{kn:gao14})\label{theo:gao}
Assume GRH and assume that $f_i$ is even and in $S(\mathbb{R})$ and $\prod_i \hat{f_i}(u_i)$ is supported in $\sum_{i=1}^n |u_i|<2$.  Then
\begin{eqnarray}\label{eq:gaotheorem}
&&\lim_{X\rightarrow \infty} \frac{\pi^2} {4X} \sum_{d\in D(X)} \sum_{j_1,\ldots,j_n} f_1(L\gamma_{8d}^{(j_1)} ) \cdots f_n(L\gamma_{8d}^{(j_n)}) \nonumber \\
&& = \sum_{Q\cup M=\{1,\ldots,n\}} \left( \prod_{m\in M} \int_{-\infty}^{\infty} f_m(x) dx\right) \left[ \sum_{S_2\subseteq Q} \left( \Big(\frac{-1}{2}\Big)^{|S_2^c|} \prod_{\ell \in S_2^c} \int_{-\infty}^{\infty} \hat{f_\ell}(u)du\right)\right. \nonumber \\
&&\times \left ( \Bigg( \frac{1+(-1)^{|S_2|}}{2}\Bigg) 2^{|S_2|/2} \sum_{S_2=(A:B)} \prod_{i=1}^{|S_2|/2} \int_{-\infty}^\infty |u_i| \hat{f_{a_i}} (u_i) \hat{f_{b_i}}(u_i) du_i \right.\nonumber \\
&&-\frac{1}{2} \sum_{\substack{S_3\subsetneq S_2\\ |S_3| {\rm \; even}}} 2^{|S_3|/2} \left( \sum_{S_3=(C:D)} \prod_{i=1}^{|S_3|/2} \int_{-\infty}^\infty |u_i| \hat{f_{c_i}}(u_i)\hat{f_{d_i}}(u_i) du_i\right)\nonumber \\
&&\left. \left.\times \sum_{I\subsetneq S_3^c}(-1)^{|I|} (-2)^{|S_3^c|}\int_{\substack{(\mathbb{R}\geq 0)^{S_3^c} \\ \sum_{i\in I}u_i\leq (\sum_{i\in I^c} u_i)-1}} \prod_{i\in S_3^c} \hat{f_i}(u_i) \prod_{i\in S_3^c} du_i\right) \right].
\end{eqnarray}
Here $Q\cup M=\{1,\ldots,n\}$ denotes partitioning $\{1,\ldots,n\}$ into two disjoint sets.  $S_2^c$ denotes the complement of $S_2$ in $Q$. The sum denoted $(A:B)$ indicates summing over all partitions $\{\{a_1,b_1\},\ldots\{a_{|S_2|/2},b_{|S_2|/2}\}\}$ of $S_2$.
\end{theorem}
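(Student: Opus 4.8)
The statement is, up to the cosmetic change of summing over all $n$-tuples of zeros rather than distinct ones, precisely Gao's Theorem II.1 of \cite{kn:gao05} (equivalently Theorem 7.2 of \cite{kn:entrodrud}), so the plan is to deduce it from that result rather than to reprove it from scratch. Two routes are available. The first is purely combinatorial: the unrestricted sum $\sum_{j_1,\ldots,j_n}$ expands as a sum over set partitions $\Pi$ of $\{1,\ldots,n\}$ of the corresponding sum over tuples distinct on the blocks of $\Pi$, with the test function on a block $B$ replaced by $\prod_{i\in B}f_i$; since the Fourier transform of such a product is the convolution of the $\hat f_i$, $i\in B$, and $\prod_{i=1}^n\hat f_i$ is supported in $\sum_i|u_i|<2$, the convolved factors again obey this constraint, so Gao's distinct-tuple theorem applies to each term and re-summing over $\Pi$ yields the stated formula. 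The second route, which I would actually write down, is to observe that the unrestricted sum is exactly the quantity that occurs in Gao's proof \emph{before} the combinatorial sieve by which he passes to distinct zeros, so one simply reads off that intermediate expression and no re-summation is needed; this is also why the paper phrases it as ``writing out Gao's result without the restriction that the zeros must be distinct''.

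For orientation I would recall the mechanism producing (\ref{eq:gaotheorem}), since the answer is organised in the Rubinstein style. Apply the explicit formula to $\sum_{j_i}f_i(L\gamma_{8d}^{(j_i)})$ --- giving $\int_{-\infty}^\infty f_i(x)\,dx$, a (under GRH) negligible error, and a prime sum $-\tfrac{2}{\log X}\sum_m\tfrac{\Lambda(m)}{m^{1/2}}\chi_{8d}(m)\hat f_i(\tfrac{\log m}{\log X})$, exactly as in the identity used for Rubinstein above --- and multiply these over $i=1,\ldots,n$. Sorting the $2^n$ resulting terms by which indices carry a prime sum (the set $Q$) and which carry an integral (the set $M$) produces the outer $\sum_{Q\cup M=\{1,\ldots,n\}}$ with its $\prod_{m\in M}\int f_m$ factor. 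The product of prime sums is then averaged over the family: by multiplicativity $\prod_{k\in Q}\chi_{8d}(m_k)=\chi_{8d}(\prod_k m_k)$, and $\tfrac{\pi^2}{4X}\sum_d\chi_{8d}(N)$ yields a main term when $N$ is a perfect square and --- crucially, once $\sum_k\tfrac{\log m_k}{\log X}$ is allowed past $1$ --- a secondary contribution from $N$ near a square. The ``square'' condition forces part of $Q$ to pair off, producing $S_2$ and the pairings $(A:B)$ with their $\int|u_i|\hat f_{a_i}\hat f_{b_i}\,du_i$ factors, while the unpaired remainder of $Q$ gives the $\int\hat f_\ell\,du$ factors weighted by $-\tfrac12$ as in Rubinstein's range, and the secondary term supplies the inner sums over $S_3\subsetneq S_2$ and $I\subsetneq S_3^c$ together with the truncated integral over $\{u\ge 0\}$ subject to $\sum_{i\in I}u_i\le(\sum_{i\in I^c}u_i)-1$.

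The only genuinely hard step, and the one I expect to be the main obstacle, is the evaluation of the family average of $\chi_{8d}(N)$ in the regime $1<\sum_k\tfrac{\log m_k}{\log X}<2$, where $N=\prod_k m_k$ may be as large as $X^2$: here the naive dichotomy ``main term if $N$ is a square, negligible otherwise'' breaks down, and one must apply Poisson summation in $d$ and estimate the resulting dual sum, extracting its polar term. This is precisely what pins the argument to support $<2$, and since it is carried out in full in \cite{kn:gao14} I would cite it outright; everything else is the bookkeeping of the previous paragraph that casts the output into the form (\ref{eq:gaotheorem}), together with the trivial passage from distinct to all tuples explained above.
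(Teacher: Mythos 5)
Your proposal matches the paper exactly: the paper offers no proof of this theorem, but simply imports Gao's Theorem II.1 (equivalently Theorem 7.2 of Entin--Roditty-Gershon--Rudnick) restated for the unrestricted sum over all $n$-tuples of zeros, which is precisely the intermediate quantity in Gao's argument before his combinatorial sieve to distinct zeros. Your citation-based route, together with the remark on passing between distinct and unrestricted tuples, is the same approach the paper takes.
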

In the above theorem we use the convention that empty products are 1 and empty sums are 0, but note that the empty set is an allowed set in a sum over sets.  In particular, if $S_3$ is empty then the sum over $(C:D)$ is 1.  If $Q$ is empty then the sum over $S_2$ is 1. If $S_2$ is empty then the sum over $S_3$ is zero. 

We aim to use (\ref{eq:Uworking2}) to prove the theorem:

\begin{theorem}\label{theo:support2}
For test functions $f_1, \ldots, f_n$ with the properties at (\ref{eq:testfunction}) and with the product of their Fourier transforms $\prod_{i=1}^n \hat{f_i}(u_i)$ having support in $\sum_{i=1}^n |u_i|<2$, 
\begin{eqnarray}
&&\lim_{X\rightarrow \infty} \frac{\pi^2} {4X} \sum_{d\in D(X)} \sum_{j_1,\ldots,j_n} f_1(L\gamma_{8d}^{(j_1)} ) \cdots f_n(L\gamma_{8d}^{(j_n)})\nonumber \\
&& = \lim_{N\rightarrow \infty}  \int_{USp(2N)} \sum _{\substack{j_1,\ldots,j_n=-\infty \\j_1,\ldots,j_n\neq 0}}^\infty f_1\Big(\frac{N}{\pi} \theta_{j_1}\Big) \cdots f_n\Big(\frac{N}{\pi} \theta_{j_n}\Big) d\mu_{{\rm Haar}},
\end{eqnarray}
where the eigenangles $\theta_{j_i}$ are counted as described at Theorem \ref{theo:massna}.
\end{theorem}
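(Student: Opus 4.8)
The plan is to rerun the argument of Theorem~\ref{theo:support1}, now invoking all of Lemmas~\ref{lem:1}--\ref{lem:4}, and to match the result against the right-hand side of Gao's Theorem~\ref{theo:gao}; by that theorem the latter equals the number-theoretic quantity on the left of Theorem~\ref{theo:support2}, so this suffices. I would start from the random matrix side in the form (\ref{eq:Uworking2}) for $USp(2N)$. Because $\prod_i\hat f_i$ is supported in $\sum_i|u_i|<2$, the estimate at (\ref{USpfactor1})--(\ref{USpfactor2}) kills every term of $J^*_{USp(2N)}(z_Q)$ with $|D|\geq 2$, so $J^*_{USp(2N)}$ may be replaced by $J^*_{USp(2N),2}$, leaving only the sets $D$ with $|D|\in\{0,1\}$. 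As in (\ref{eq:QM}) I would then, for each partition $Q\cup M=\{1,\dots,n\}$, pull the $M$-variables out of the integral and remove them with Lemma~\ref{lem:3}, leaving the factor $\prod_{m\in M}\int_{-\infty}^\infty f_m(x)\,dx$ times the $N\to\infty$ limit of $\tfrac{2^{|Q|}}{(2\pi i)^{|Q|}}\int_{(\delta)^{|Q|}}J^*_{USp(2N),2}(z_Q)\prod_{k\in Q}f_k\big(\tfrac{N}{\pi} iz_k\big)\,dz_Q$.

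The $D=\emptyset$ part of $J^*_{USp(2N),2}$ is exactly the integrand of the support-$1$ computation: the sum over splittings of $z_Q$ into blocks of size $\leq 2$ gives $\sum_\mu\prod_{\{a,b\}\in\mu}(\tfrac{z'}{z})'(z_a+z_b)\prod_{\ell\not\in\mu}\tfrac{z'}{z}(2z_\ell)$ over partial matchings $\mu$ of $Q$. Calling the matched set $S_2$ (which has even size, so on letting $S_2$ run over all subsets one inserts $\tfrac{1+(-1)^{|S_2|}}{2}$) and the unmatched singletons $S_2^c$, Lemma~\ref{lem:1} converts each matched pair to $2\int_{-\infty}^\infty|u|\hat f_a(u)\hat f_b(u)\,du$ and Lemma~\ref{lem:2} converts each unmatched singleton to $-\tfrac12\int_{-\infty}^\infty\hat f_\ell(u)\,du$. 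This reproduces the first term inside the $S_2$-bracket of (\ref{eq:gaotheorem}).

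For $D=\{z_d\}$ I would first evaluate the radical in (\ref{def:UJq}): with $D=\{z_d\}$ one has $Z(D,D)=Y(D)=z(2z_d)$, $Z(D^-,D^-)=Y(D^-)=z(-2z_d)$ and $Z^\dagger(D^-,D)=1$ (its only factor has argument $-z_d+z_d=0$ and is deleted by the dagger), so the radicand is $z(-2z_d)^2$ and, with $(-1)^{|D|}$ and the exponential, the prefactor is $-e^{-2Nz_d}z(-2z_d)$. On $z_Q\setminus\{z_d\}$ one again sums over splittings into blocks of size $\leq 2$; a $2$-block $\{z_a,z_b\}$ contributes $(\tfrac{z'}{z})'(z_a+z_b)$ as before, while a singleton block $\{z_\alpha\}$ contributes the three terms of $H_{\{z_d\}}(\{z_\alpha\})=\tfrac{z'}{z}(z_\alpha-z_d)-\tfrac{z'}{z}(z_\alpha+z_d)+\tfrac{z'}{z}(2z_\alpha)$. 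Expanding the product over singleton blocks, I would tag each singleton as \emph{type-$A$} (the $-\tfrac{z'}{z}(z_\alpha+z_d)$ piece), \emph{type-$B$} (the $\tfrac{z'}{z}(z_\alpha-z_d)$ piece), or \emph{free} (the $\tfrac{z'}{z}(2z_\alpha)$ piece). The integrand then factors into three independent groups: the $2$-blocks (Lemma~\ref{lem:1}), the free singletons (Lemma~\ref{lem:2}), and the group consisting of $z_d$ together with all type-$A$ and type-$B$ variables, which --- once one sums over the choice of the distinguished element --- is precisely the left side of Lemma~\ref{lem:4} with $A=\{\text{type-}A\}$ and $B=\{z_d\}\cup\{\text{type-}B\}$, giving $\tfrac{-1}{2}\,2^{|A\cup B|}(-1)^{|B|}\int_{\substack{(\mathbb R\geq 0)^{A\cup B}\\ \sum_{i\in A}u_i\leq(\sum_{i\in B}u_i)-1}}\prod_{i}\hat f_i(u_i)\prod_i du_i$. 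Matching bookkeeping with (\ref{eq:gaotheorem}) --- $S_2=Q$ minus the free singletons, $S_3\subsetneq S_2$ the paired variables (a proper, even subset since $z_d\in S_2\setminus S_3$), $S_3^c=S_2\setminus S_3$ the Lemma~\ref{lem:4} group, $I\subsetneq S_3^c$ the type-$A$ set, $I^c=\{z_d\}\cup\{\text{type-}B\}$ --- and using $(-2)^{|S_3^c|}(-1)^{|I|}=2^{|A\cup B|}(-1)^{|B|}$ together with the overall $-\tfrac12$ from Lemma~\ref{lem:4}, this $D=\{z_d\}$ contribution becomes exactly the second term inside the $S_2$-bracket of (\ref{eq:gaotheorem}); the factor $2^{|Q|}$ in (\ref{eq:Uworking2}) is absorbed by the per-variable factors of $2$ already present in Lemmas~\ref{lem:1}, \ref{lem:2}, \ref{lem:4}. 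Adding the two contributions and summing over $Q\cup M$ gives the right side of Gao's Theorem~\ref{theo:gao}, proving Theorem~\ref{theo:support2}.

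I expect the real work to be in the last paragraph: getting the $|D|=1$ radical right, carrying out the three-way split of every singleton $H_{\{z_d\}}$-block, and lining up all the sets and signs ($S_2,S_3,S_3^c,I$, the powers of $2$, and $(-1)^{|B|}$ versus $(-1)^{|I|}$) with Gao's expression. The key point to verify is that the sum over the distinguished element $z_d$ is exactly what the internal telescoping in Lemma~\ref{lem:4} consumes, so that no spurious $|B|$-fold multiplicity and no leftover boundary term appears; one should also check that the contour ordering $0<\delta_1<\cdots<\delta_m$ required inside Lemma~\ref{lem:4} is compatible with the other factors, which it is because $(\tfrac{z'}{z})'(z_a+z_b)$ and $\tfrac{z'}{z}(2z_\ell)$ have poles only at $z_a=-z_b$ and $z_\ell=0$, away from the region where those shifts matter.
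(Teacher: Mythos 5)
Your proposal is correct and follows essentially the same route as the paper: truncate $J^*_{USp(2N)}$ to $|D|\leq 1$, strip off the $M$-variables with Lemma \ref{lem:3}, expand each singleton factor $H_{\{d\}}(z_\ell)$ into its three pieces and regroup into free singletons, matched pairs, and the $d$-group, then apply Lemmas \ref{lem:1}, \ref{lem:2}, \ref{lem:4} (with $A=I$, $B=I^c$) to land on Gao's expression. Your computation of the $|D|=1$ prefactor $-e^{-2Nz_d}z(-2z_d)$ and the sign identity $(-2)^{|S_3^c|}(-1)^{|I|}=2^{|A\cup B|}(-1)^{|I^c|}$ both check out against the paper's (\ref{eq:J2}) and (\ref{eq:J2a}).
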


\begin{proof}[Proof of Theorem \ref{theo:support2}]
We start, as in the previous section, with (\ref{eq:Js}).  As the support of the Fourier transforms is in $\sum_{i=1}^n |u_i|<2$, this time we replace $J_{USp(2N)}^*(z_Q)$ with $J_{USp(2N),2}^*(z_Q)$ as described at (\ref{def:UJq}).  Thus, in analogy with (\ref{eq:QM}),  we have
\begin{eqnarray}\label{eq:J2z}
&&\lim_{N\rightarrow \infty}  \int_{USp(2N)} \sum _{\substack{j_1,\ldots,j_n=-\infty \\j_1,\ldots,j_n\neq 0}}^\infty f_1\Big(\frac{N}{\pi} \theta_{j_1}\Big) \cdots f_n\Big(\frac{N}{\pi} \theta_{j_n}\Big) d\mu_{{\rm Haar}}  \nonumber\\
&&=\sum_{Q\cup M = \{1,\ldots,n\}} \Bigg( \prod_{m\in M} \int_{-\infty}^\infty f_m(x)dx\Bigg)\nonumber \\ &&\qquad\qquad \times \Bigg(\lim_{N\rightarrow \infty} \frac{2^{|Q|}}{(2\pi i)^{|Q|}} \int_{(\delta)^{|Q|}} J_{USp(2N),2}^* (z_Q) \prod_{k\in Q} f_k\Big( \frac{N}{\pi}iz_k\Big) dz_Q \Bigg).
\end{eqnarray}

We will concentrate on the factor containing $J_{USp(2N),2}^* (z_Q) $. We will use (\ref{def:UJstar}) (or alternatively see (\ref{def:UJq})) and keep only terms with $|D|=0$ (first line of (\ref{eq:J2}) below) or 1 (second line below).  
\begin{eqnarray}\label{eq:J2}
J_{USp(2N),2}^* (z_Q)& =&  \sum_{\substack{R\subset Q \\ |R| {\rm \;even}}} \left( \prod_{\ell\in R^c} H_{\emptyset}(z_\ell)\right) \left(\sum_{R=(A:B)} \prod_{j=1}^{|R|/2} H_\emptyset (z_{a_j},z_{b_j})\right) \nonumber \\
 &&+\sum_{d\in Q}-e^{-2Nz_d}z(-2z_d) \sum_{\substack{R\subset Q\backslash\{d\}\\|R| {\rm \;even}}}  \left( \prod_{\ell\in R^c} H_{\{d\}}(z_\ell)\right) \left(\sum_{R=(A:B)} \prod_{j=1}^{|R|/2} H_{\{d\}}(z_{a_j},z_{b_j})\right) .
\end{eqnarray}

We now note that in the definition of 
\begin{equation}\label{eq:H_dzell}
H_{\{d\}}(z_\ell)= \frac{z'}{z}(z_\ell-z_d) -\frac{z'}{z}(z_\ell+z_d) +\frac{z'}{z}(2z_\ell)
\end{equation}
 there is the term $\frac{z'}{z}(2z_\ell)=:H_{\emptyset}(z_\ell)$.  Also it is clear that $H_{\{d\}}(z_{a_j},z_{b_j})=H_{\emptyset}(z_{a_j},z_{b_j})$.  In (\ref{eq:J2}) we multiply out the product over $H_{\{d\}}(z_\ell)$ and rearrange the terms, grouping together the $H_{\emptyset}(z_\ell)$ factors.  Again, in (\ref{eq:J2a}), the first line corresponds to terms with $|D|=0$ and the later lines to $|D|=1$. 
 \begin{eqnarray}\label{eq:J2a}
J_{USp(2N),2}^* (z_Q)& =& \sum_{S_2\subset Q} \left( \prod_{\ell \in S_2^c} \frac{z'}{z}(2z_\ell)\right) \left[\frac{1+(-1)^{|S_2|}}{2} \sum_{S_2 = (A:B)} \prod_{j=1}^{|S_2|/2} \left(\frac{z'}{z}\right)'(z_{a_j}+z_{b_j}) \right.\nonumber \\
&&+\sum_{\substack{S_3\subsetneq S_2\\|S_3|{\rm \;even}}} \left(\sum_{S_3=(C:D)} \prod_{j=1}^{|S_3|/2} \left(\frac{z'}{z}\right)'(z_{c_j}+z_{d_j})\right)\nonumber \\
&&\qquad\left. \times \left(\sum_{I\subsetneq S_3^c} \sum_{d\in I^c} \left( \prod_{i\in I} -\frac{z'}{z}(z_i+z_d)\right) \left(\prod_{j\in I^c\backslash\{d\}}\frac{z'}{z}(z_j-z_d) \right) \left(-e^{-2Nz_d}z(-2z_d)\right) \right)   \right].
\end{eqnarray}
We note that $S_3\neq S_2$, so there must be at least one element in $S_3^c$, the complement of $S_3$ in $S_2$. Similarly, there must be one element in $I^c$, so we know the sum over $d$ always exists, meaning we always have one variable playing the role of $d$ from the second line of (\ref{eq:J2}), while the other variables in $Q$ are spread between the factors of $H_{\emptyset}(z_\ell)$, $H_{\emptyset}(z_{a_j},z_{b_j})$ or factors of the form of the the first or second term of (\ref{eq:H_dzell}).  

Therefore (\ref{eq:J2z}) equals
\begin{eqnarray}
&&\sum_{Q\cup M = \{1,\ldots,n\}} \Bigg( \prod_{m\in M} \int_{-\infty}^\infty f_m(x)dx\Bigg)\nonumber \\ &&\qquad \times \Bigg(\lim_{N\rightarrow \infty}  \sum_{S_2\subset Q} \left( \prod_{\ell \in S_2^c} \frac{2}{2\pi i} \int_{(\delta)}\frac{z'}{z}(2z_\ell)f_\ell\Big( \tfrac{N}{\pi}iz_\ell\Big)dz_\ell\right) \nonumber \\
&& \qquad \times\left[\frac{1+(-1)^{|S_2|}}{2} \sum_{S_2 = (A:B)} \prod_{j=1}^{|S_2|/2}\frac{4}{(2\pi i)^2}\int_{(\delta)} \int_{(\delta)}  \left(\frac{z'}{z}\right)'(z_{a_j}+z_{b_j})f_{a_j}\Big( \tfrac{N}{\pi}iz_{a_j}\Big)f_{b_j}\Big( \tfrac{N}{\pi}iz_{b_j}\Big)dz_{a_j}dz_{b_j} \right.\nonumber \\
&&\qquad+\sum_{\substack{S_3\subsetneq S_2\\|S_3|{\rm \;even}}} \left(\sum_{S_3=(C:D)} \prod_{j=1}^{|S_3|/2}\frac{4}{(2\pi i)^2} \int_{(\delta)} 
\int_{(\delta)} \left(\frac{z'}{z}\right)'(z_{c_j}+z_{d_j})f_{c_j}\Big( \tfrac{N}{\pi}iz_{c_j}\Big)f_{d_j}\Big( \tfrac{N}{\pi}iz_{d_j}\Big)dz_{c_j}dz_{d_j}\right)\nonumber \\
&&\qquad \times \left(\sum_{I\subsetneq S_3^c} \sum_{d\in I^c}\frac{2}{2\pi i}\int_{(\delta)} \left(-e^{-2Nz_d}z(-2z_d)\right) \left( \prod_{i\in I} \frac{2}{2\pi i}\int_{(\delta)}-\frac{z'}{z}(z_i+z_d)f_i\Big( \tfrac{N}{\pi}iz_i\Big) dz_i\right)\right. \nonumber \\
&&\qquad\left.\left.\times\left(\prod_{j\in I^c\backslash\{d\}}\frac{2}{2\pi i} \int_{(\delta)} \frac{z'}{z}(z_j-z_d)f_j\Big( \tfrac{N}{\pi}iz_j\Big) dz_j \right) f_d\Big( \tfrac{N}{\pi}iz_d\Big) dz_d \right)   \right]  \Bigg)
\end{eqnarray}
We can now apply Lemmas \ref{lem:1}, \ref{lem:2} and \ref{lem:4} (with $A=I$ and $B=I^c$) to achieve the desired result. (Note that the set $S_3^c=I\cup I^c$ is not necessarily a set of consecutive integers $\{1, 2,\ldots,m\}$ as stated in Lemma \ref{lem:4}, but it it certainly a set of positive integers with no duplications, which one can easily imagine mapping onto $\{1, 2,\ldots,m\}$ without changing the proof of Lemma \ref{lem:4} in any way.)
\end{proof}

\section{Support $\sum|u_j|<3$} \label{sect:support3}

To our knowledge, there is no symplectic or orthogonal family of $L$-functions where the level densities can be written down when the support of the Fourier transform of the test function extends beyond $\sum|u_j|<2$ - including in the function field case.  So, in this section we write down the random matrix $n$-level densities that hold in the range $\sum|u_j|<3$ in a form similar to Gao's Theorem \ref{theo:gao}.  In doing this we hope it may lead to a number theoretical result being proven for a specific family.  We note that although more terms survive as the support increases, and so the formulae appear more unwieldy, in fact the ideas used in this section are just those used in the previous section for support in  $\sum|u_j|<2$.  So, it seems in the random matrix case there is no barrier, besides more convoluted formulae, to extending the support as far as need be. 

We start with the formula, analogous to (\ref{eq:QM}), featuring $J_{USp(2N),3}^* (z_Q)$ and valid for $\sum|u_j|<3$,
\begin{eqnarray}\label{eq:J3a}
&&\lim_{N\rightarrow \infty}  \int_{USp(2N)} \sum _{\substack{j_1,\ldots,j_n=-\infty \\j_1,\ldots,j_n\neq 0}}^\infty f_1\Big(\frac{N}{\pi} \theta_{j_1}\Big) \cdots f_n\Big(\frac{N}{\pi} \theta_{j_n}\Big) d\mu_{{\rm Haar}}  \nonumber\\
&&=\sum_{Q\cup M = \{1,\ldots,n\}} \Bigg( \prod_{m\in M} \int_{-\infty}^\infty f_m(x)dx\Bigg)\nonumber \\ &&\qquad\qquad \times \Bigg(\lim_{N\rightarrow \infty} \frac{2^{|Q|}}{(2\pi i)^{|Q|}} \int_{(\delta)^{|Q|}} J_{USp(2N),3}^* (z_Q) \prod_{k\in Q} f_k\Big( \frac{N}{\pi}iz_k\Big) dz_Q \Bigg).
\end{eqnarray}

Similarly to the previous section, we examine $J_{USp(2N),3}^* (z_Q) $. We use (\ref{def:UJstar}) and keep only terms with $|D|=0,1,2$ (see discussion at (\ref{def:UJq})):
\begin{eqnarray}\label{eq:J3}
J_{USp(2N),3}^* (z_Q)& =&  \sum_{\substack{R\subset Q \\ |R| {\rm \;even}}} \left( \prod_{\ell\in R^c} H_{\emptyset}(z_\ell)\right) \left(\sum_{R=(A:B)} \prod_{j=1}^{|R|/2} H_\emptyset (z_{a_j},z_{b_j})\right) \nonumber \\
 &&+\sum_{d\in Q}-e^{-2Nz_d}z(-2z_d) \sum_{\substack{R\subset Q\backslash\{d\}\\|R| {\rm \;even}}}  \left( \prod_{\ell\in R^c} H_{\{d\}}(z_\ell)\right) \left(\sum_{R=(A:B)} \prod_{j=1}^{|R|/2} H_{\{d\}}(z_{a_j},z_{b_j})\right)\nonumber \\
 &&+\sum_{d,g\in Q} e^{-2N(z_d+z_g)}\frac{z(z_d+z_g) z(-z_d-z_g)z(-2z_d)z(-2z_g)} {z(z_d-z_g) z(z_g-z_d)} \nonumber \\
 && \qquad\qquad \times \sum_{\substack{R\subset Q\backslash\{d,g\}\\|R| {\rm \;even}}}  \left( \prod_{\ell\in R^c} H_{\{d,g\}}(z_\ell)\right) \left(\sum_{R=(A:B)} \prod_{j=1}^{|R|/2} H_{\{d,g\}}(z_{a_j},z_{b_j})\right) .
\end{eqnarray}
Once again,  in the definition of 
\begin{equation}
H_{\{d,g\}}(z_\ell)= \frac{z'}{z}(z_\ell-z_d) -\frac{z'}{z}(z_\ell+z_d)+ \frac{z'}{z}(z_\ell-z_g) -\frac{z'}{z}(z_\ell+z_g) +\frac{z'}{z}(2z_\ell)
\end{equation}
 there is the term $\frac{z'}{z}(2z_\ell)=:H_{\emptyset}(z_\ell)$.  Also it is clear that $H_{\{d,g\}}(z_{a_j},z_{b_j})=H_{\emptyset}(z_{a_j},z_{b_j})$. As in the previous section, we rearrange terms, grouping together the  $H_{\emptyset}(z_\ell)$ terms.  Note that we could also pull together the $H_{\emptyset}(z_{a_j},z_{b_j})$ terms that occur in each line of (\ref{eq:J3}) to save a bit of space, but   here we will leave them separated to make the end result as similar to Gao's theorem as possible, to aid those who have worked with his result.  So we have that (\ref{eq:J3a}) equals
\begin{eqnarray}\label{eq:mess1}
&&\sum_{Q\cup M = \{1,\ldots,n\}} \Bigg( \prod_{m\in M} \int_{-\infty}^\infty f_m(x)dx\Bigg)\nonumber \\ &&\qquad \times \Bigg(\lim_{N\rightarrow \infty}  \sum_{S_2\subset Q} \left( \prod_{\ell \in S_2^c} \frac{2}{2\pi i} \int_{(\delta)}\frac{z'}{z}(2z_\ell)f_\ell\Big( \tfrac{N}{\pi}iz_\ell\Big)dz_\ell\right) \nonumber \\
&& \qquad \times\Bigg[\frac{1+(-1)^{|S_2|}}{2}\sum_{S_2 = (A:B)} \prod_{j=1}^{|S_2|/2}\frac{4}{(2\pi i)^2}\int_{(\delta)} \int_{(\delta)}  \left(\frac{z'}{z}\right)'(z_{a_j}+z_{b_j})f_{a_j}\Big( \tfrac{N}{\pi}iz_{a_j}\Big)f_{b_j}\Big( \tfrac{N}{\pi}iz_{b_j}\Big)dz_{a_j}dz_{b_j} \nonumber \\
&&\qquad+\sum_{\substack{S_3\subsetneq S_2\\|S_3|{\rm \;even}}} \left(\sum_{S_3=(C:D)} \prod_{j=1}^{|S_3|/2}\frac{4}{(2\pi i)^2} \int_{(\delta)} \int_{(\delta)} \left(\frac{z'}{z}\right)'(z_{c_j}+z_{d_j})f_{c_j}\Big( \tfrac{N}{\pi}iz_{c_j}\Big)f_{d_j}\Big( \tfrac{N}{\pi}iz_{d_j}\Big)dz_{c_j}dz_{d_j}\right)\nonumber \\
&&\qquad \times \Bigg\{\sum_{\substack{I\cup I_c = S_3^c\\|I_c|\geq 1}} \sum_{d\in I^c}\frac{2}{2\pi i}\int_{(\delta)} \left(-e^{-2Nz_d}z(-2z_d)\right) \left( \prod_{i\in I} \frac{2}{2\pi i}\int_{(\delta)}-\frac{z'}{z}(z_i+z_d)f_i\Big( \tfrac{N}{\pi}iz_i\Big) dz_i\right) \nonumber \\
&&\qquad\times\left(\prod_{j\in I^c\backslash\{d\}}\frac{2}{2\pi i} \int_{(\delta)} \frac{z'}{z}(z_j-z_d)f_j\Big( \tfrac{N}{\pi}iz_j\Big) dz_j \right) f_d\Big( \tfrac{N}{\pi}iz_d\Big) dz_d \Bigg\}  \nonumber\\
&& \qquad+\sum_{\substack{S_4\subsetneq S_2\\|S_4|{\rm \;even}}} \left(\sum_{S_4=(G:H)} \prod_{j=1}^{|S_4|/2}\frac{4}{(2\pi i)^2} \int_{(\delta)} \int_{(\delta)} \left(\frac{z'}{z}\right)'(z_{g_j}+z_{h_j})f_{g_j}\Big( \tfrac{N}{\pi}iz_{g_j}\Big)f_{h_j}\Big( \tfrac{N}{\pi}iz_{h_j}\Big)dz_{g_j}dz_{h_j}\right)\nonumber \\
&&\qquad \times \Bigg\{\sum_{\substack{I_1\cup I_2\cup I_1^c\cup I_2^c =S_4^c\\ |I_1^c|\geq 1,\; |I_2^c|\geq 1}} \sum_{d\in I_1^c}\sum_{g \in I_2^c}\frac{4}{(2\pi i)^2}\int_{(\delta)}\int_{(\delta)} \left(e^{-2N(z_d+z_g)}\frac{z(z_d+z_g) z(-z_d-z_g)z(-2z_d)z(-2z_g)} {z(z_d-z_g) z(z_g-z_d)}\right) \nonumber\\
&&\qquad\qquad \qquad\qquad\times \left( \prod_{i\in I_1} \frac{2}{2\pi i}\int_{(\delta)}-\frac{z'}{z}(z_i+z_d)f_i\Big( \tfrac{N}{\pi}iz_i\Big) dz_i\right) \nonumber \\
&&\qquad\qquad\qquad\qquad\times\left(\prod_{j\in I_1^c\backslash\{d\}}\frac{2}{2\pi i} \int_{(\delta)} \frac{z'}{z}(z_j-z_d)f_j\Big( \tfrac{N}{\pi}iz_j\Big) dz_j \right)\nonumber \\
&&\qquad\qquad\qquad\qquad \times \left( \prod_{i\in I_2} \frac{2}{2\pi i}\int_{(\delta)}-\frac{z'}{z}(z_i+z_g)f_i\Big( \tfrac{N}{\pi}iz_i\Big) dz_i\right) \nonumber \\
&&\qquad\qquad\qquad\qquad\times\left(\prod_{j\in I_2^c\backslash\{g\}}\frac{2}{2\pi i} \int_{(\delta)} \frac{z'}{z}(z_j-z_g)f_j\Big( \tfrac{N}{\pi}iz_j\Big) dz_j \right)\nonumber \\
&&\qquad\qquad\qquad\qquad\times f_d\Big( \tfrac{N}{\pi}iz_d\Big) f_g\Big(\tfrac{N}{\pi} iz_g\Big)dz_d dz_g\left. \right)  \nonumber
\Bigg\}\Bigg]  \Bigg).
\end{eqnarray}

We now use Lemma \ref{lem:5} for the last six lines of (\ref{eq:mess1}), and noting that the first seven lines of (\ref{eq:mess1}) are all familiar from the previous section, our final result is

\begin{theorem}\label{theo:support3}
For test functions $f_1, \ldots, f_n$ with properties (\ref{eq:testfunction}) and with the product of their Fourier transforms $\prod_{i=1}^n \hat{f_i}(u_i)$ having support in $\sum_{i=1}^n |u_i|<3$, 
\begin{eqnarray}
&&  \lim_{N\rightarrow \infty}  \int_{USp(2N)} \sum _{\substack{j_1,\ldots,j_n=-\infty \\j_1,\ldots,j_n\neq 0}}^\infty f_1\Big(\frac{N}{\pi} \theta_{j_1}\Big) \cdots f_n\Big(\frac{N}{\pi} \theta_{j_n}\Big) d\mu_{{\rm Haar}}\nonumber \\
&&=\sum_{Q\cup M = \{1,\ldots,n\}} \Bigg( \prod_{m\in M} \int_{-\infty}^\infty f_m(x)dx\Bigg) \times \Bigg(  \sum_{S_2\subset Q} \left( \Big(\frac{-1}{2}\Big)^{|S_2^c|} \prod_{\ell \in S_2^c} \int_{-\infty}^{\infty} \hat{f_\ell}(u)du\right) \nonumber \\
&& \qquad \times\Bigg[\frac{1+(-1)^{|S_2|}}{2}2^{|S_2|/2} \sum_{S_2=(A:B)} \prod_{i=1}^{|S_2|/2} \int_{-\infty}^\infty |u_i| \hat{f_{a_i}} (u_i) \hat{f_{b_i}}(u_i) du_i  \nonumber \\
&&\qquad-\frac{1}{2}\sum_{\substack{S_3\subsetneq S_2\\|S_3|{\rm \;even}}} 2^{|S_3|/2} \left( \sum_{S_3=(C:D)} \prod_{i=1}^{|S_3|/2} \int_{-\infty}^\infty |u_i| \hat{f_{c_i}}(u_i)\hat{f_{d_i}}(u_i) du_i\right)\nonumber \\
&&\qquad\qquad \times \Bigg\{\sum_{\substack{I\cup I_c = S_3^c\\|I_c|\geq 1}} (-1)^{|I^c|} 2^{|S_3^c|}\int_{\substack{(\mathbb{R}\geq 0)^{S_3^c} \\ \sum_{i\in I}u_i\leq (\sum_{i\in I^c} u_i)-1}} \prod_{i\in S_3^c} \hat{f_i}(u_i) \prod_{i\in S_3^c} du_i \Bigg\}  \nonumber\\
&& \qquad+\sum_{\substack{S_4\subsetneq S_2\\|S_4|{\rm \;even}}}2^{|S_4|/2} \left( \sum_{S_4=(G:H)} \prod_{i=1}^{|S_4|/2} \int_{-\infty}^\infty |u_i| \hat{f_{g_i}}(u_i)\hat{f_{h_i}}(u_i) du_i\right)\nonumber \\
&&\qquad\qquad \times \Bigg\{\sum_{\substack{I_1\cup I_2\cup I_1^c\cup I_2^c =S_4^c\\ |I_1^c|\geq 1,\; |I_2^c|\geq 1}}(-1)^{|I^c_1\cup I^c_2|} 2^{|S_4^c|}\int_{\substack{(\mathbb{R}\geq 0)^{|S_4^c|} \\ \sum_{i\in I_1}u_i\leq (\sum_{i\in I^c_1} u_i)-1\\ \sum_{i\in I_2}u_i\leq (\sum_{i\in I^c_2} u_i)-1}} \left( \frac{1}{4}-\left[\left(-\sum_{i\in I_2}u_i+\sum_{j\in I^c_2} u_j-1\right)\right.\right.\nonumber \\
&&\qquad\qquad \times\left.\left. \delta\left(-\sum_{i\in I_1\cup I^c_2}u_i+\sum_{j\in I^c_1\cup I_2} u_j\right)\right]\right)\prod_{i\in S_4^c} \hat{f_i}(u_i) \prod_{i\in S_4^c} du_i \nonumber
\Bigg\}\Bigg]  \Bigg)
\end{eqnarray}
where the eigenangles $\theta_{j_i}$ are counted as described at Theorem \ref{theo:massna}. Here $Q\cup M=\{1,\ldots,n\}$ denotes partitioning $\{1,\ldots,n\}$ into two disjoint sets, and analogously for other instances of similar notation.  $S_2^c$ denotes the complement of $S_2$ in $Q$. The sum denoted $(A:B)$ indicates summing over all partitions $\{\{a_1,b_1\},\ldots\{a_{|S_2|/2},b_{|S_2|/2}\}\}$ of  $S_2$.
\end{theorem}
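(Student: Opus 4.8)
The plan is to follow, almost verbatim, the argument used for Theorem~\ref{theo:support2}, the only new analytic ingredient being Lemma~\ref{lem:5}. I would begin from the identity~(\ref{eq:J3a}), which is obtained from the alternative $n$-level density~(\ref{eq:Uworking2}) in exactly the way~(\ref{eq:QM}) and~(\ref{eq:J2z}) were obtained: first the support hypothesis $\sum_i|u_i|<3$ together with the bounds~(\ref{USpfactor1})--(\ref{USpfactor2}) lets one replace $J_{USp(2N)}^*(z_Q)$ by the truncation $J_{USp(2N),3}^*(z_Q)$ of~(\ref{def:UJq}) (only $|D|\le 2$ survives), and then the $M$-variables are pulled out of the contour integral and Lemma~\ref{lem:3} is applied to each, producing $\prod_{m\in M}\int_{-\infty}^\infty f_m(x)\,dx$. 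Everything that follows concerns the bracketed factor built from $J_{USp(2N),3}^*(z_Q)$.

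Next I would expand $J_{USp(2N),3}^*(z_Q)$ from the definition~(\ref{def:UJstar}), keeping the $|D|=0,1,2$ terms; a short computation of the radical for $D=\{z_d,z_g\}$, using that the dagger on $Z$ deletes the $z(0)$ factors, gives exactly the prefactor $z(z_d+z_g)z(-z_d-z_g)z(-2z_d)z(-2z_g)/(z(z_d-z_g)z(z_g-z_d))$ appearing in~(\ref{eq:J3}). The combinatorial core of the proof is then the regrouping that turns~(\ref{eq:J3}) into~(\ref{eq:mess1}), done exactly as in the passage from~(\ref{eq:J2}) to~(\ref{eq:J2a}): in each of the $|D|=1$ and $|D|=2$ blocks one multiplies out $\prod_\ell H_{\{d\}}(z_\ell)$, resp.\ $\prod_\ell H_{\{d,g\}}(z_\ell)$, and uses the two elementary facts that $\tfrac{z'}{z}(2z_\ell)=H_\emptyset(z_\ell)$ occurs as a summand of $H_{\{d\}}(z_\ell)$ and of $H_{\{d,g\}}(z_\ell)$, and that $H_{\{d\}}(z_a,z_b)=H_{\{d,g\}}(z_a,z_b)=H_\emptyset(z_a,z_b)$, so as to sort every variable into one of three roles: a genuine $\tfrac{z'}{z}(2z_\ell)$-singleton (these form $S_2^c$), a paired variable (these form the inner pairing sets $S_3$ or $S_4$), or a variable attached to $d$ (resp.\ to $d$ or to $g$) through a factor $\pm\tfrac{z'}{z}(z_\ell\mp z_d)$ (these, with $d$ itself, form $S_3^c=I\cup I^c$; resp.\ $S_4^c=I_1\cup I_1^c\cup I_2\cup I_2^c$). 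One must verify that this bookkeeping reproduces exactly the index conditions in the theorem: $S_3\subsetneq S_2$ with $|I_c|\ge 1$ because a $d$ must exist, $S_4\subsetneq S_2$ with $|I_1^c|,|I_2^c|\ge 1$ because both a $d$ and a $g$ must exist, and the factor $\tfrac{1+(-1)^{|S_2|}}{2}$ on the pure $|D|=0$ block, which records that there all of $S_2$ is paired and hence $|S_2|$ must be even, whereas in the other two blocks the unpaired part $S_3^c$, $S_4^c$ may have any size.

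Finally I would evaluate~(\ref{eq:mess1}) term by term in the $N\to\infty$ limit: Lemma~\ref{lem:2} turns each factor $\tfrac{2}{2\pi i}\int_{(\delta)}\tfrac{z'}{z}(2z_\ell)f_\ell(\tfrac{N}{\pi}iz_\ell)\,dz_\ell$ into $-\tfrac{1}{2}\int_{-\infty}^\infty\hat f_\ell(u)\,du$, Lemma~\ref{lem:1} turns each $\tfrac{4}{(2\pi i)^2}\int_{(\delta)}\int_{(\delta)}(\tfrac{z'}{z})'(z_a+z_b)f_a f_b\,dz_a dz_b$ into $2\int_{-\infty}^\infty|u|\hat f_a(u)\hat f_b(u)\,du$, Lemma~\ref{lem:4} (with $A=I$, $B=I^c$) evaluates the $|D|=1$ block, and Lemma~\ref{lem:5} (with $A_1=I_1$, $B_1=I_1^c$, $A_2=I_2$, $B_2=I_2^c$) evaluates the $|D|=2$ block; as already noted in the proof of Theorem~\ref{theo:support2}, the sets $S_3^c,S_4^c$ need not be intervals $\{1,\dots,m\}$, but one may relabel their elements without altering those lemmas. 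Collecting the constants $-\tfrac{1}{2}$, $(-1)^{|I^c|}$ and $2^{|S_3^c|}$ from Lemma~\ref{lem:4}, the constants $2^{|S_4^c|}$, $(-1)^{|I_1^c\cup I_2^c|}$ and the integrand $(\tfrac{1}{4}-[\cdots])$ from Lemma~\ref{lem:5}, and the factors $(-\tfrac{1}{2})^{|S_2^c|}$, $2^{|S_2|/2}$, $2^{|S_3|/2}$, $2^{|S_4|/2}$ coming from the singletons and pairings, one arrives precisely at the right-hand side of the theorem.

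The only place I expect to spend real effort is the regrouping step: keeping straight which subset inclusions are strict, which size constraints ($|I_c|\ge 1$ and the like) are forced, where the parity factor $\tfrac{1+(-1)^{|S_2|}}{2}$ comes from, and how the signs and powers of two attach to each of the three blocks. There is no analytic difficulty beyond what is already contained in Lemmas~\ref{lem:1}--\ref{lem:5}; in particular no contour integral is encountered here that was not already handled in the proofs of those lemmas, so once the combinatorial accounting matches~(\ref{eq:mess1}) the theorem follows immediately.
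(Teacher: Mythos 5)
Your proposal follows essentially the same route as the paper: truncating to $J_{USp(2N),3}^*$, extracting the $M$-variables with Lemma \ref{lem:3}, regrouping the $|D|=0,1,2$ terms exactly as in the passage from (\ref{eq:J2}) to (\ref{eq:J2a}) to arrive at (\ref{eq:mess1}), and then applying Lemmas \ref{lem:1}, \ref{lem:2}, \ref{lem:4} and \ref{lem:5} to each block. The combinatorial bookkeeping you flag (strict inclusions, the parity factor, the nonemptiness conditions on $I^c$, $I_1^c$, $I_2^c$) is precisely what the paper's derivation attends to, so your outline is correct and complete.
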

In the above theorem we use the convention that empty products are 1 and empty sums are 0, but note that the empty set is an allowed set in a sum over sets.  In particular, if $S_3$ is empty then the sum over $(C:D)$ is 1.  If $Q$ is empty then the sum over $S_2$ is 1. If $S_2$ is empty then the sum over $S_3$ is zero, and so forth.

\pagebreak


\end{document}